\documentclass[11pt]{amsart}
\usepackage{amsmath}
\usepackage{amsthm,amssymb}
\usepackage{hyperref}
\usepackage{amssymb}
\usepackage{graphicx}
\usepackage[utf8]{inputenc}
\input xy
\xyoption{all}
\usepackage{color}
\usepackage{enumerate}
\newtheorem{theorem}{Theorem}[section]

\newtheorem{lemma}[theorem]{Lemma}

\newtheorem{corollary}[theorem]{Corollary}
\newtheorem{proposition}[theorem]{Proposition}
\theoremstyle{definition}
\newtheorem{definition}[theorem]{Definition}
\newtheorem{remark}[theorem]{Remark}
\newtheorem{problem}[theorem]{Problem}

\newcommand{\End}{\mbox{\rm End}}

\newcommand{\diag}{\mbox{\rm diag}}

\newcommand{\ch}{\mbox{\rm char}}

\newcommand{\N}{\mathbb{N}}
\newcommand{\Z}{\mathbb{Z}}
\newcommand{\C}{\mathbb{C}}

\begin{document}

\title[Rings generated by periodic elements]{Rings additively generated by \\ periodic elements}

\author[M. H. Bien]{M. H. Bien$^{1,2}$}
\author[P. V. Danchev]{P. V. Danchev$^{3}$}
\author[M. Ramezan-Nassab]{M. Ramezan-Nassab$^{4,5}$}

\address{[1] Faculty of Mathematics and Computer Science, University of Science, Ho Chi Minh City, Vietnam}

\address{[2] Vietnam National University, Ho Chi Minh City, Vietnam}

\address{[3] Institute of Mathematics and Informatics, Bulgarian Academy of Sciences, 1113 Sofia, Bulgaria}

\address{[4] Department of Mathematics, Kharazmi University, 50 Taleghani Street, Tehran, Iran} ~~
    \address{[5] School of Mathematics, Institute for Research in Fundamental Sciences (IPM), P.O. Box 19395-5746, Tehran, Iran.} ~~~~ \\ \\ \\

    \medskip

\email{M. H. Bien: mhbien@hcmus.edu.vn \newline
        P. V. Danchev: danchev@math.bas.bg; pvdanchev@yahoo.com \newline
        M. Ramezan-Nassab: ramezann@khu.ac.ir}

\keywords{Division rings; Torsion units; Periodic elements and rings.\\
\protect
\indent 2020 {\it Mathematics Subject Classification.} 16K40, 16S34, 16S50, 16U60. \\
\indent{\it Corresponding Author: Peter V. Danchev}}

\begin{abstract}
In the present paper, as a generalization of the classical periodic rings, we explore those rings whose elements are additively generated by two (or more) periodic elements by calling them {\it additively periodic}. We prove that, in some major cases, additively periodic rings remain periodic too; this includes, for instance, algebraic algebras, group rings, and matrix rings over commutative rings. Moreover, we also obtain some independent results for the new class of rings; for example, the triangular matrix rings retain that property.
\end{abstract}

\maketitle


\section{Introduction and Known Facts}

Everywhere in the text of this paper, all rings are assumed to be associative with an identity. Recall that an element $r$ of a ring $R$ is said to be {\it periodic} if $r^n = r^m$ for some natural numbers $n$ and $m$, where $n > m \geq 1$. In particular, when $m = 1$, the element $r$ is called a  {\it potent} or, more precisely, an {\it $n$-potent}. A ring $R$ is referred to as a {\it periodic ring} if all of its elements are periodic. Furthermore, $R$ is said to be {\it weakly periodic} if every element $x$ in $R$ can be expressed as $x = a + b$, where $a$ is a potent element and $b$ is a nilpotent element. It is known that every periodic ring is weakly periodic, but according to \cite[Examples 3.1 and 3.2]{Ser}, the converse is not true. There exist numerous papers in the literature that discuss (weakly) periodic rings, including references such as \cite{ABD, Cui, Di, Hirano}.

\medskip

Now, let us define a broader class of rings than that of weakly periodic.
\begin{definition}\label{def}
Let $R$ be a ring and let $k$ be a natural number. We say that  $a\in R$ is an {\it additively $k$-periodic element} if $a$ can be written as a sum of at most $k$ periodic elements in $R$. The ring $R$ is called an {\it additively $k$-periodic ring} if each of its elements is additively $k$-periodic. Finally, $R$ is called {\it additively periodic} if each of its element $a\in R$ is additively $m$-periodic for some $m=m(a)$.
\end{definition}

It is clear that every additively $1$-periodic ring is periodic, and that every weakly periodic ring is additively $2$-periodic. However, as we will see in Remark~\ref{rem}, there are additively periodic rings that are {\it not} weakly periodic. Thus, we have the following inclusion's relationships between these classes of rings:

$$\{\text{periodic} \}\subsetneqq \{ \text{weakly periodic}\}\subsetneqq\{\text{additively periodic}\}$$
and
$$\{ \text{weakly periodic}\}\subseteq \{ \text{additively $2$-periodic}\}\subseteq
 \{\text{additively periodic}\}.$$

While we are still uncertain about whether or not there exists an additively periodic ring that is not additively $2$-periodic, and whether or not there exists an additively $2$-periodic ring that is not weakly periodic, this paper aims to investigate which of these rings are actually still periodic.

\medskip

Our main motivation for Definition~\ref{def} and for the current study comes from the following three points of view:

\medskip

Firstly, in many papers, such as \cite{ACDT, AT, Ser, YKZ}, the authors studied the rings in which each element is the sum of two idempotents, one idempotent and one tripotent, one idempotent and one nilpotent, and finally, two tripotents. However, it is worth noting that all of these elements commute pairwise.

Our second motivation for the current study is concerned with the Diesl's recent work in~\cite{Di}, where he considered rings whose elements are sums of finitely many potents and one nilpotent element, all of which commute pairwise. Precisely, it was established there that interesting decomposable results exist for such rings, thus somewhat simplifying their complicated structure. Generally, our focus of interest is to investigate rings, which are possibly non-commutative, in which each element is additively generated by periodic elements. In fact, in contrast to \cite{Di}, we do not require our elements to commute with each other.

And the final motivation of our work deals with the structure of division rings. In fact, the famous theorem of Wedderburn asserts that any finite division ring is always a field (see, for instance, \cite{Lam}).  Moreover, by the remarkable well-known Kaplansky's result, if a division ring $D$ is radical over its center $F$ (i.e., for any $x\in D$, there is $n\in \mathbb{N}$, depending on $x$, such that $x^n\in F$), then $D$ is a field (see, e.g., \cite[Theorem 15.15]{Lam}). Therefore, in addition, if $D$ is a division ring with torsion unit group $\mathcal{U}(D)$, then $D$ is a field. As every periodic unit element is, in fact, a torsion unit, this implies that every periodic division ring is a field. Therefore, we naturally arrive at the following expansion of the last fact, which poses the following challenging question:

\begin{problem}\label{major}
If in a division ring each element is a finite sum of torsion units, is then this ring a field?
\end{problem}

Since this question seems to be extremely insurmountable at this stage, in what follows, we shall partially resolve this problem in the affirmative for some concrete cases by using some non-standard results from field theory.

\medskip

Concretely, our work is organized as follows: in Section 2, we study those rings whose elements are (additively) generated by periodic elements as we distribute our results into four subsections, bearing in mind their scientific directions. Indeed, in the first subsection, we mainly concentrate on some basic facts concerning additively periodic rings. As a remarkable result, we show that each commutative additively periodic as well as any algebraic additively $2$-periodic ring is periodic (Theorems~\ref{commuting} and \ref{algebraic}). Further, in the next second subsection, we are pertained to the examination of group rings as we achieve Theorem~\ref{nilpotent} which shows that if $R$ is a commutative ring and $G$ is a nilpotent group, then the group ring $RG$ is additively periodic if, and only if, $RG$ is periodic. In the  third subsection, we are involved to examine triangular and full matrix rings and succeed to prove Theorem~\ref{triang}. Particularly, the triangular matrix ring $\mathbb{T}_n(R)$ (resp., the full matric ring ${\rm M}_n(R)$) is additively periodic (resp., additively $2$-periodic) if, and only if, $\mathbb{T}_n(R)$ (resp., ${\rm M}_n(R)$) is periodic. In the final fourth subsection, we are devoted to the so-called {\it torsion product property} and here we obtain as a main result Theorem~\ref{strongly}. As a mentioned result, in Corollary~\ref{2-torsion}, we will show that if $\mathcal{U}(R)$ is either a torsion or a locally nilpotent group and, in both cases, each element of $R$ is a sum of two unit elements, then $R$ is periodic.

In closing Section~3, we give some closely related concluding remarks that, hopefully, will stimulate a further intensive research of the topic, as well as we also state four relevantly difficult and still unsettled questions of some interest and importance.

\medskip


\medskip

\section{Rings Generated by Periodic Elements}

\subsection{Basic facts and general results}
In this subsection, we provide the main basic result that we will use throughout our paper.

For any ring $R$, we denote its multiplicative group by ${\mathcal U}(R)$. Note that if $a\in {\mathcal U}(R)$ is periodic, then $a$ is, in fact, a {\it torsion unit} or just a {\it root of unity}, as $a^k=1$ for some $k\in \mathbb{N}$.

\medskip

The next simple claim is the key in proving our chief result listed below.


\begin{lemma}\label{fields}
For a field $F$, the next two points are equivalent$:$
\begin{itemize}
\item[(1)] $F$ is an algebraic extension of a finite field.
\item[(2)] $F$ is an additively periodic ring.
\end{itemize}
\end{lemma}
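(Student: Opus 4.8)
The plan is to prove the two implications separately, with the substance lying in $(2)\Rightarrow(1)$.

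For $(1)\Rightarrow(2)$, suppose $F$ is algebraic over a finite field $\mathbb{F}_q$. Given any $x\in F$, the subfield $\mathbb{F}_q(x)$ is a finite extension of $\mathbb{F}_q$, hence a finite field, so $x$ is a root of unity (if $x\neq 0$) or zero; in either case $x$ is periodic, since in a finite field every element satisfies $x^{N}=x$ for $N=|\mathbb{F}_q(x)|$. Thus $F$ is even additively $1$-periodic (i.e. periodic), which is stronger than what is required.

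For $(2)\Rightarrow(1)$, I would first pin down the characteristic. If $F$ has characteristic $0$, then $1\in F$ is not periodic: the powers $1^n=1$ are all equal but more to the point any element $a$ with $a^n=a^m$, $n>m\ge 1$, satisfies $a^m(a^{n-m}-1)=0$, so in a field either $a=0$ or $a$ is a root of unity; and a characteristic-$0$ field has only finitely many roots of unity inside any finitely generated subfield, yet sums of roots of unity and $0$ cannot exhaust $F$ — concretely, consider an element like $1/p$ or a transcendental, or simply note $2 = 1+1+\cdots$ cannot be reached because a sum of $k$ roots of unity in an ordered or archimedean context is bounded, but the cleanest argument is: a root of unity $\zeta$ satisfies $|\zeta|=1$ under any embedding into $\mathbb{C}$ of a number field, so a sum of $k$ of them has absolute value $\le k$, which already fails for suitable elements; for function-field type characteristic-$0$ fields one argues that roots of unity are algebraic over $\mathbb{Q}$ while $F$ contains transcendentals. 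So $\ch F = p > 0$, and the prime subfield is $\mathbb{F}_p$.

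Now the heart of the matter: assuming $\ch F = p>0$, I must show every $x\in F$ is algebraic over $\mathbb{F}_p$. Write $x = u_1 + \cdots + u_k$ where each $u_i$ is periodic in $F$; as noted, each $u_i$ is either $0$ or a root of unity, hence each $u_i$ is algebraic over $\mathbb{F}_p$ (a root of unity of order $d$ lies in $\mathbb{F}_{p^e}$ where $e$ is the multiplicative order of $p$ mod $d$). A finite sum of elements algebraic over $\mathbb{F}_p$ is again algebraic over $\mathbb{F}_p$, since $\mathbb{F}_p(u_1,\dots,u_k)$ is a finite extension of $\mathbb{F}_p$. Therefore $x$ is algebraic over $\mathbb{F}_p$, and since this holds for all $x\in F$, the field $F$ is algebraic over its prime field $\mathbb{F}_p$, which is a finite field. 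This completes $(2)\Rightarrow(1)$.

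The main obstacle — really the only delicate point — is ruling out characteristic $0$ cleanly, i.e. showing no characteristic-$0$ field can be additively periodic. The self-contained way is: in a field, periodic $\Leftrightarrow$ ($0$ or root of unity); all roots of unity in a characteristic-$0$ field are algebraic over $\mathbb{Q}$ (indeed lie in $\overline{\mathbb{Q}}$); hence every additively periodic element is algebraic over $\mathbb{Q}$; but a characteristic-$0$ field that is additively periodic would then be algebraic over $\mathbb{Q}$, and one checks directly that $\overline{\mathbb{Q}}$ itself is not additively periodic — e.g. $\sqrt{2}$ is not a finite sum of roots of unity, which follows from a height/archimedean-absolute-value bound (a sum of $k$ roots of unity has every archimedean absolute value at most $k$ and is an algebraic integer, while one can exhibit elements violating such bounds, or more simply invoke that $\frac12\notin\overline{\mathbb{Z}}$ is not even an algebraic integer whereas sums of roots of unity are). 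I would present whichever of these variants is shortest, and I expect the referee-proof version to use the algebraic-integer observation: sums of roots of unity are algebraic integers, so if $F$ has characteristic $0$ then every element of $F$ is an algebraic integer, contradicting e.g. $\frac12\in\mathbb{Q}\subseteq F$.
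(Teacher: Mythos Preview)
Your proof is correct and, once you settle on the algebraic-integer argument for ruling out characteristic $0$, it is essentially the paper's proof: both observe that a sum of roots of unity is integral over $\mathbb{Z}$, which contradicts $\tfrac12\in\mathbb{Q}\subseteq F$ not being an algebraic integer (the paper phrases this via integral closure of $\mathbb{Z}$ in $\mathbb{Q}$, citing Atiyah--MacDonald), and in positive characteristic both use that roots of unity---hence their finite sums---are algebraic over $\mathbb{F}_p$. Your write-up meanders through several alternatives before landing on this line, whereas the paper goes straight to it; conversely, you spell out the positive-characteristic step that the paper dismisses as ``clear.''
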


\begin{proof}

(1) $\Rightarrow$ (2). Let $F \supseteq K$ be a finite field, and let $F/K$ be an algebraic extension. Assume $a \in F$. Since $a$ is algebraic over $K$, we have that $K(a)/K$ is a finite extension. Hence, $K(a)$ is a finite field. Therefore, $a$ is a root of unity.

(2) $\Rightarrow$ (1). First, assume that $\text{char}(F)=0$, thus $\mathbb{Q}\subseteq F$. Let $a$ be any nonzero element of $\mathbb{Q}$. By hypothesis, one can write that $$a=a_1+\cdots+a_k,$$ where for each $1\leq i\leq k$, $a_i\in F$ is a root of unity. Thus, the subring $\mathbb{Z}[a_1,\dots,a_k]$ of $F$ generated by $a_1,\dots,a_k$ over $\mathbb{Z}$ is integral over $\mathbb{Z}$ (see \cite[Proposition~5.1 and Corollary~5.3]{Atiyah}). Consequently, $a\in\mathbb{Z}[a_1,\dots,a_k]$ is integral over $\mathbb{Z}$. Since $a$ was arbitrary from $\mathbb{Q}$, this contradicts the fact that $\mathbb{Z}$ is integrally closed (see \cite[Example~5.0]{Atiyah}).

Therefore, we may assume that $\text{char}(F)=p>0$. Then, $\mathbb{F}_p$, the finite field of $p$ elements, is contained in $F$, and clearly, $F/\mathbb{F}_p$ is an algebraic extension, as required.
\end{proof}

\medskip

The next two technicalities are very useful for establishing our further results.


\begin{lemma}\label{char}
If  $R$ is an additively $2$-periodic ring, then $R$ is of positive characteristic.
\end{lemma}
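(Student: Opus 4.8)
The plan is to argue by contradiction: suppose $R$ is an additively $2$-periodic ring of characteristic zero. The key observation is that $1 \in R$, and since $R$ has characteristic zero, the additive subgroup generated by $1$ is a copy of $\Z$, so there is a ring homomorphism $\Z \hookrightarrow R$. I would then try to produce, inside $R$, an element that behaves like a rational non-integer or like $1/2$ (or more generally an element witnessing that the prime subring is not "periodic-closed"), and derive a contradiction with the integrality argument already used in Lemma~\ref{fields}. Concretely, every element of $R$ — in particular every integer multiple of $1$, but more usefully some carefully chosen element — is a sum of two periodic elements; each periodic element $a$ satisfies $a^n = a^m$ with $n > m \geq 1$, hence is integral over $\Z$ (it satisfies the monic-up-to-leading-coefficient relation $a^n - a^m = 0$; one has to be slightly careful since $a^n - a^m$ is monic of degree $n$, so indeed $a$ is integral over $\Z$). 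Thus every additively $2$-periodic element lies in a subring of $R$ that is integral over $\Z$, generated by the finitely many periodic summands.

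The main step is to exploit a \emph{specific} element of $R$ to break integrality. The natural candidate is to look at the element $\tfrac{1}{2}\cdot 1$ — but of course that need not exist in $R$. Instead, I would look for an element $r \in R$ with $2r = 1$ only if available; lacking that, the cleaner route is: since every element of $R$ is a sum of two periodic elements, in particular $1 = e + f$ with $e, f$ periodic, and more importantly, consider an arbitrary $r \in R$ and the subring $S = \Z[e_1, e_2]$ generated by the two periodic summands $e_1, e_2$ of $r$. As in Lemma~\ref{fields}, $S$ is integral (hence module-finite in stages) over $\Z$. Now I want to contradict this using the characteristic-zero hypothesis. The trick I expect to work: pick $r$ to be a suitable integer multiple of $1$ that cannot be integral over $\Z$ after inverting nothing — but every integer \emph{is} integral over $\Z$, so that fails. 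The real contradiction must come from considering an element like a root of a non-monic polynomial. Here is the fix: since $R$ is additively $2$-periodic, consider whether $R$ contains $\mathbb{Q}$; if $R$ is a $\Q$-algebra this is immediate and Lemma~\ref{fields}'s argument applies verbatim. In general, for characteristic zero it suffices to show the prime subring embeds in a way that some element fails integrality; the honest approach is to reduce to the field case.

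So the cleanest plan: assume $\ch(R) = 0$ and let $P$ be the prime subring, $P \cong \Z$. Consider the central idempotent-free situation — actually, I would instead pass to a quotient or a localization. The most robust argument: in $R$, the element $1$ is periodic (it is $1$-potent, since $1^2 = 1$), so $1$ alone is fine, but consider $2 = 1+1 \in R$; we have $2 = a+b$ with $a, b$ periodic. Now iterate cleverly or, better, observe that \emph{every} element $n \cdot 1$ for $n \in \Z$ being a sum of two periodic elements forces strong constraints. The key technical point — and the place I expect the real work to be — is showing that these constraints are incompatible with $\Z \hookrightarrow R$. I would carry this out by choosing $r = \tfrac{?}{?}$... since that is unavailable, I will instead argue: if every element of $R$ is a sum of two elements each integral over $\Z$, then (by the closure of integral elements under addition within any commutative subring) every element of the center $Z(R)$, or at least every element in any commutative subring containing $P$, is integral over $\Z$; applying this to $Z(R)$ (which contains $P \cong \Z$ and is commutative) shows $Z(R)$ is integral over $\Z$. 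If $Z(R)$ were all of a localization like $\Z[\tfrac12]$ we'd be done — but $Z(R) = \Z$ is itself integral over $\Z$, no contradiction.

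Therefore the genuine argument must use that periodic elements are not merely integral but \emph{torsion-like}: a periodic element $a$ with $a^n = a^m$, $n > m$, satisfies $a^m(a^{n-m} - 1) = 0$, so in a domain $a = 0$ or $a^{n-m} = 1$, i.e. $a$ is a root of unity. Passing to an appropriate homomorphic image which is a domain (e.g. $R/M$ for a maximal ideal $M$, assuming $R$ nonzero, giving a division ring — though one should be careful, maximal ideals give simple rings, not division rings in the noncommutative case; but a maximal \emph{left} ideal and the trick of looking at a primitive image, or simply noting $Z(R)$ has a maximal ideal with field quotient of characteristic $0$) reduces us to: a field $k$ of characteristic $0$ in which every element is a sum of two roots of unity — more precisely, a sum of at most two periodic elements of $R$ mapping into $k$, and periodic images in a field are $0$ or roots of unity. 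Then $k$ is additively $2$-periodic, hence (by Lemma~\ref{fields}) $k$ is an algebraic extension of a finite field, contradicting $\ch k = 0$. The main obstacle is arranging a domain (ideally field) quotient of $R$ of characteristic zero into which $P$ embeds; I expect to handle this by localizing the center $Z(R)$ at a minimal prime and killing torsion, or by invoking that $\Z \subseteq Z(R)$ and $Z(R)$ has a prime ideal $\mathfrak{p}$ with $\mathfrak p \cap \Z = 0$, so $Z(R)/\mathfrak p$ has characteristic zero; then either push forward to a simple quotient of $R$ or argue directly that $Z(R)/\mathfrak p$ itself is additively $2$-periodic and apply Lemma~\ref{fields} to its fraction field.
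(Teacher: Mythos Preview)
Your proposal is a brainstorm rather than a proof, and the route you finally settle on has a genuine gap. You want to produce a characteristic-zero field $k$, arising either as $Z(R)/\mathfrak p$ or as a simple quotient of $R$, which is itself additively $2$-periodic, and then invoke Lemma~\ref{fields}. Neither construction works as stated. For $Z(R)/\mathfrak p$: given $z\in Z(R)$ with $z=a+b$ and $a,b$ periodic in $R$, you correctly note that $a$ and $b$ commute with each other (since $b=z-a$), but they need \emph{not} lie in $Z(R)$, so they do not descend to $Z(R)/\mathfrak p$; there is no reason $Z(R)/\mathfrak p$ should be additively $2$-periodic. For a simple quotient of $R$: maximal two-sided ideals give simple rings, not fields, and in the noncommutative setting you have no mechanism to force such a quotient to be a domain of characteristic zero.

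The paper's argument avoids this by never trying to make a \emph{global} field quotient. Instead it fixes a single central element, namely $3\in\Z\subseteq R$, writes $3=u+v$ with $u,v$ periodic, observes (as you did) that centrality forces $uv=vu$, and then works entirely inside the commutative subring $B=\Z[u,v]$, which is integral over $\Z$. The inclusion $\Z\hookrightarrow\C$ extends to a ring homomorphism $\alpha\colon B\to\C$ (Atiyah--MacDonald, Ch.~5, Exercise~2; equivalently, take a prime of $B$ lying over $(0)\subset\Z$ and embed the fraction field into $\overline{\Q}$). In $\C$ a periodic element has absolute value $0$ or $1$, so $3=|\alpha(u)+\alpha(v)|\le |\alpha(u)|+|\alpha(v)|\le 2$, a contradiction. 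The point you were missing is that one should map the small commutative ring $\Z[u,v]$ into $\C$, rather than search for a field quotient of all of $R$ or of $Z(R)$.
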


\begin{proof}
If $R$ is of zero characteristic, then $\Z$ can be viewed as a subring of $R$. By hypothesis, there are two periodic elements $u$ and $v$ in $R$ such that $3=u+v$, and, as $u+v$ is central in $R$, we have $uv=vu$. Thus, the subring $B=\Z[u,v]$ of $R$ generated by $u$ and $v$ over $\Z$ is necessarily a commutative ring which is integral over $\Z$.

Now, let $f:\Z \to \C$ be the inclusion homomorphism. So, consulting with \cite[Exercise~2, p.~67]{Atiyah}, there exists a ring homomorphism $\alpha:B\to \C$ such that $\alpha|_\Z=f$. Therefore, $3=\alpha(3)=\alpha(u)+\alpha(v)$. Note that, if $x$ is a periodic element of $\C$, then the absolute value $|x|$ is either $0$ or $1$. Consequently,
$$3=|\alpha(u)+\alpha(v)|\leq |\alpha(u)|+|\alpha(v)|\leq 2,$$
which is a contradiction. Hence, the characteristic of $R$ is non-zero, as asserted.
\end{proof}


\begin{remark}\label{remark-k}
Let $R\subseteq S$ be two rings such that each  element of $R$ can be  expressed as a sum of $k$ commuting periodic elements of $S$ for some natural number $k$. A similar reasoning as in the proof of Lemma~\ref{char} shows that $R$ has a positive characteristic.
\end{remark}


\begin{lemma}\label{commute-periodic}
Let $R$ be a ring of positive characteristic, and let $k$ be a natural number. If $x\in R$ can be written as a sum of $k$ commuting periodic elements of $R$, then $x$ itself is periodic.
\end{lemma}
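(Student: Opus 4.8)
The plan is to reduce to a single periodic element. Write $x = x_1 + \cdots + x_k$ with the $x_i$ pairwise commuting and each $x_i$ periodic. Since $R$ has positive characteristic, say $\operatorname{char}(R) = N$, the prime subring of $R$ is the finite ring $\mathbb{Z}/N\mathbb{Z}$, and the commutative subring $B = (\mathbb{Z}/N\mathbb{Z})[x_1,\dots,x_k]$ generated by the $x_i$ contains $x$. The key observation is that $B$ is a finite ring: each $x_i$ satisfies a monic polynomial over $\mathbb{Z}/N\mathbb{Z}$ (from $x_i^{n_i} = x_i^{m_i}$ we get that $x_i$ is a root of the monic polynomial $t^{n_i} - t^{m_i}$, hence $x_i$ is integral over $\mathbb{Z}/N\mathbb{Z}$), so $B$ is a finitely generated module over the finite ring $\mathbb{Z}/N\mathbb{Z}$, and is therefore finite.

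Once $B$ is known to be finite, the element $x \in B$ lies in a finite (hence periodic) ring: the sequence $x, x^2, x^3, \dots$ takes only finitely many values in $B$, so $x^n = x^m$ for some $n > m \ge 1$, which is exactly the assertion that $x$ is periodic in $B$, and a fortiori in $R$. This handles the case $k$ arbitrary in one stroke, with no need to induct on $k$ or to invoke anything about $\mathbb{C}$ as in Lemma~\ref{char}.

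The main point to be careful about is the claim that $B$ is finite. One should spell out that if $x_i$ satisfies a monic polynomial of degree $d_i$ over $\mathbb{Z}/N\mathbb{Z}$, then $B$ is spanned, as a $\mathbb{Z}/N\mathbb{Z}$-module, by the finitely many monomials $x_1^{e_1}\cdots x_k^{e_k}$ with $0 \le e_i < d_i$ — this uses commutativity of the $x_i$ in an essential way to collect exponents — and that a module generated by finitely many elements over a finite ring is finite. (Here $t^{n_i} - t^{m_i}$ is visibly monic since $n_i > m_i \ge 1$, so the needed monic relation is immediate and we do not even need to pass through general integrality.) No genuine obstacle arises; the argument is a clean finiteness count once the positive-characteristic hypothesis is used to make the prime subring finite.
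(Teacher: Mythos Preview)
Your argument is correct and follows essentially the same approach as the paper: both show that the commutative subring generated over the finite prime ring $\mathbb{Z}/N\mathbb{Z}$ by the commuting periodic elements $x_1,\dots,x_k$ is finite, and hence $x$ has only finitely many powers. The only cosmetic differences are that the paper reduces by induction to $k=2$ and uses the periodicity of each $x_i$ directly to see that the set of monomials $x_1^i x_2^j$ is finite, whereas you treat general $k$ at once and phrase the finiteness via the monic relation $t^{n_i}-t^{m_i}$; the underlying idea is identical.
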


\begin{proof}
Write $x=x_1+x_2+\cdots+x_k$, where each $x_i$ is periodic and $x_ix_j=x_jx_i$ for all $1\leq i,j\leq k$. It is readily seen that it suffices to prove the statement only for the case $k=2$. To that aim, let $\ch (R)=n>0$. Clearly, the set
$$\{\alpha x_1^ix_2^j\mid \alpha\in\{0,1,\dots,n-1\}, i,j\in\N\}$$
is finite. As $x_1x_2=x_2 x_1$, this implies that the set $\{x^t\mid t\in \N\}$ is also finite. Therefore, $x$ is  periodic, as formulated.
\end{proof}


Combining the results of Remark~\ref{remark-k} and Lemma~\ref{commute-periodic}, we can easily deduce one of our following main results.

\begin{theorem}\label{commuting}
Let $R$ be a ring and $k$ a natural number. If every element of $R$ is expressed as a sum of $k$ commuting periodic elements, then $R$ is periodic. Particularly, each commutative additively periodic ring is periodic.
\end{theorem}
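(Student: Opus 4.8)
The plan is to reduce Theorem~\ref{commuting} to the two facts already established: Remark~\ref{remark-k} (with $S = R$ and $k$ fixed), which forces $R$ to have positive characteristic once every element is a sum of $k$ commuting periodic elements, and Lemma~\ref{commute-periodic}, which upgrades "sum of $k$ commuting periodic elements" to "periodic" in the presence of positive characteristic. So the first step is simply to invoke Remark~\ref{remark-k} to conclude $\ch(R) = n > 0$; here one only needs that the hypothesis supplies, for each element, a decomposition into $k$ pairwise-commuting periodic elements, which is exactly the setting of the remark. The second step is to take an arbitrary $x \in R$, write $x = x_1 + \cdots + x_k$ with the $x_i$ periodic and pairwise commuting by hypothesis, and apply Lemma~\ref{commute-periodic} to conclude that $x$ is periodic. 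Since $x$ was arbitrary, $R$ is periodic.

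For the "particularly" clause, I would argue as follows. Let $R$ be a commutative additively periodic ring. By definition, each $a \in R$ is additively $m$-periodic for some $m = m(a)$, i.e.\ $a$ is a sum of at most $m(a)$ periodic elements; and since $R$ is commutative, those periodic summands commute pairwise automatically. The subtlety is that the number of summands $m(a)$ is allowed to depend on $a$, so this is not literally the uniform-$k$ hypothesis of the first part of the theorem. I would handle this by observing that the uniform bound is not actually needed once we know the characteristic is positive: first, apply Lemma~\ref{char} (or Remark~\ref{remark-k}) to the fixed element $3 \in R$, which is a sum of at most $m(3)$ commuting periodic elements, to deduce $\ch(R) = n > 0$; then, for each $a \in R$, Lemma~\ref{commute-periodic} with $k = m(a)$ shows $a$ is periodic. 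Thus $R$ is periodic.

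The main obstacle — really the only point requiring care — is precisely this mismatch between the "uniform $k$" phrasing in the statement of the first assertion and the "pointwise varying $m(a)$" definition of \emph{additively periodic}; one must be slightly careful that the characteristic argument only needs a single element (say $3$, or any integer $\ell$ with $2 \le \ell$, after checking $\ell$ is not a sum of two periodic complex numbers of modulus $\le 1$) to get off the ground, after which Lemma~\ref{commute-periodic} applies element by element with no uniformity needed. Everything else is a direct citation of the preceding results, so the proof should be short.

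\begin{proof}
Since every element of $R$ is, by hypothesis, a sum of $k$ commuting periodic elements, Remark~\ref{remark-k} (applied with $S = R$) shows that $R$ has positive characteristic. Now let $x \in R$ be arbitrary and write $x = x_1 + \cdots + x_k$ with each $x_i$ periodic and $x_i x_j = x_j x_i$ for all $i,j$. By Lemma~\ref{commute-periodic}, $x$ is periodic. As $x$ was arbitrary, $R$ is periodic.

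For the final assertion, let $R$ be a commutative additively periodic ring. Applying Lemma~\ref{char} to the element $3 \in R$, which by assumption is a sum of periodic elements (two of them may be grouped, so in particular a sum of two periodic elements after padding with $0$), we obtain $\ch(R) = n > 0$. Given any $a \in R$, write $a = a_1 + \cdots + a_m$ with each $a_i$ periodic; commutativity of $R$ guarantees $a_i a_j = a_j a_i$. By Lemma~\ref{commute-periodic} with $k = m$, the element $a$ is periodic. Hence $R$ is periodic.
\end{proof}
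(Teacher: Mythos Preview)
Your argument for the main clause (uniform $k$) is correct and is exactly the paper's approach: invoke Remark~\ref{remark-k} to obtain positive characteristic, then apply Lemma~\ref{commute-periodic} element by element.

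The ``particularly'' clause is where the real difficulty lies, and your patch does not work. You propose to write $3 = a_1 + \cdots + a_m$ with $m = m(3)$ and then either (i) group summands so that $3$ becomes a sum of two periodic elements, or (ii) run the modulus argument of Lemma~\ref{char} on this single decomposition. For (i), collapsing a sum of commuting periodic elements into a single periodic element already \emph{requires} positive characteristic via Lemma~\ref{commute-periodic}, so the reasoning is circular. For (ii), the extension $\alpha\colon \Z[a_1,\dots,a_m]\to\C$ only yields $3 = |\alpha(a_1)+\cdots+\alpha(a_m)|\le m$, which is no contradiction once $m(3)\ge 3$; and since $m(a)$ is allowed to depend on $a$, nothing forbids $m(\ell)\ge \ell$ for every integer $\ell$.

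In fact the ``particularly'' clause is false as written: take $R=\Z$. The only periodic elements of $\Z$ are $0,\,1,\,-1$, and every integer is a finite sum of copies of $\pm 1$, so $\Z$ is commutative and additively periodic; yet $\Z$ is not periodic (for instance, $2$ is not). The paper's one-line justification (``combining Remark~\ref{remark-k} and Lemma~\ref{commute-periodic}'') glosses over exactly the subtlety you flagged and really only establishes the uniform-$k$ statement. So your instinct that something extra is needed for the second sentence was right; what is actually needed is an additional hypothesis (a uniform bound on the number of summands, or $\ch(R)>0$ assumed outright), not a sharper argument.
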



\begin{remark}\label{rem}
We certainly cannot remove the assumption of ``commuting" in Theorem~\ref{commuting}. To see this, let $V$ be an infinite-dimensional vector space over a field $F$. Let $p_i(t)$, $1\leq i\leq 4$, be split polynomials of degree $2$ in $F[t]$. Then, by \cite[Theorem~1.1]{Pazzis}, every element $\alpha\in\End_F(V)$ can be represented as $\alpha=\sum_{i=1}^4 \alpha_i$ where $\alpha_i\in \End_F(V)$ and $p_i(\alpha_i)=0$ for all $1\leq i\leq 4$.
Particularly, if we let $p_i(t)= t^2-t$ or $p_i(t)= t^2-1$, this implies that each element of $\End_F(V)$ is a sum of at most four periodic elements (idempotents or involutions). Now, if $F$ is of zero characteristic, then clearly $\End_F(V)$ is {\it not} (weakly) periodic, as wanted.

As another example, let $\mathcal H$ be an infinite-dimensional (complex) Hilbert space and denote $B(\mathcal H)$ as the ring of all bounded linear operators on $\mathcal H$. Then, according to \cite[Corollary~3.2]{Hilbert}, every operator $A\in B(\mathcal H)$ can be decomposed as a sum of at most four automorphisms of order 3. However, it is clear that $B(\mathcal H)$ is {\it not} (weakly) periodic, as desired.
\end{remark}


We will show in the next result that we can remove the assumption of ``commuting" in Theorem~\ref{commuting} for the algebraic algebra in the case when $k=2$.

\begin{theorem}\label{algebraic}
Let $F$ be a field and let $R$ be an algebraic $F$-algebra. Then, in each of the following cases, $R$ is a periodic ring:
\begin{itemize}
\item[(1)]   each element of $F$ is a sum of $k$ commuting periodic elements in $R$.
\item[(2)] $R$ is additively $2$-periodic.
\item[(3)]  $\mathcal{U}(R)$ is a torsion group.
\end{itemize}
\end{theorem}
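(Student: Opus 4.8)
The plan is to reduce the three cases to the "commuting" situation already handled by Theorem~\ref{commuting} together with Lemma~\ref{commute-periodic}, the main technical work being to establish that $R$ has positive characteristic and then to exploit algebraicity to pass from sums of (possibly non-commuting) periodic elements back to genuinely periodic elements. First, in case (1) I would observe that the hypothesis (each element of $F$ is a sum of $k$ commuting periodic elements of $R$) places $F$, hence the prime subfield of $F$, in a position to apply Remark~\ref{remark-k} with $R \subseteq S$ taken as $F \subseteq R$: this forces $\ch(F) = p > 0$, so $R$ is an $\mathbb F_p$-algebra. Then for any $x \in R$, since $R$ is algebraic over $F = \mathbb F_p(\dots)$ and in fact $x$ is algebraic over a finitely generated, hence finite, subfield (using that $\mathbb F_p$ is finite and the relevant field extension generated by finitely many coefficients is finite), the subring $\mathbb F_p[x]$ is finite; therefore $x$ is periodic. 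So case (1) reduces to a characteristic computation plus the standard finiteness argument.

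For case (2), the key point is that $R$ being additively $2$-periodic gives $\ch(R) = p > 0$ directly by Lemma~\ref{char}. Once we know $R$ is an algebra over $\mathbb F_p$, algebraicity does the rest without needing the $2$-periodic decomposition at all: for each $x \in R$, the $\mathbb F_p$-subalgebra $\mathbb F_p[x]$ is generated by $x$ which is algebraic over $F$; here I need the subtlety that "algebraic over $F$" plus "$\ch F = p$" does not by itself make $\mathbb F_p[x]$ finite unless the minimal polynomial of $x$ over $F$ has coefficients in a finite subfield — but the coefficients lie in a finitely generated extension of $\mathbb F_p$ inside $F$, and I should argue that we may pass to such a finite subfield $\mathbb F_{p^s}$ over which $x$ is still algebraic, making $\mathbb F_{p^s}[x]$ a finite ring and hence $x$ periodic. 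Actually the cleanest route for (2): use Lemma~\ref{char} to get $\ch R = p$, then note $R$ is algebraic over $\mathbb F_p$ itself (any algebra algebraic over $F$ with $F$ algebraic over $\mathbb F_p$ is algebraic over $\mathbb F_p$; but $F$ need not be algebraic over $\mathbb F_p$) — so I cannot shortcut this way in general, and the finite-subfield argument is genuinely needed. I would spell it out: write a monic relation $x^m + c_{m-1}x^{m-1} + \dots + c_0 = 0$ with $c_i \in F$, let $F_0 = \mathbb F_p(c_0, \dots, c_{m-1})$; if $F_0$ is finite we are done as above, and if $F_0$ is infinite I instead invoke that $R$ is additively $2$-periodic to write each $c_i = u_i + v_i$ with $u_i, v_i$ periodic, all lying in the commutative algebraic subalgebra they generate — here is where I may need to loop back through Lemma~\ref{commute-periodic}.

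Case (3) is the most delicate: $\mathcal U(R)$ torsion does not immediately bound the characteristic, so I would first argue $\ch R = p > 0$ by an independent route. One option: if $\ch R = 0$ then $\mathbb Z \subseteq R$, $2 \in \mathcal U(R)$? — no, $2$ need not be a unit, so torsion of $\mathcal U(R)$ does not bite directly. Instead I would use algebraicity: $2 \in F \subseteq R$ is algebraic over the prime field $\mathbb Q$, which is vacuous; the real leverage is that $R$ being an algebraic $\mathbb Q$-algebra which is additively generated by... no — case (3) does not assume additive generation. So for (3) I would argue differently: an algebraic algebra over a field $F$ with torsion unit group — consider the subfield generated, reduce to showing $F$ has positive characteristic. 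Hmm, if $F = \mathbb Q$ and $R$ is an algebraic $\mathbb Q$-algebra, then $\mathcal U(R) \supseteq \mathcal U(\mathbb Q) = \mathbb Q^\times$ which is torsion-free and nontrivial, contradicting $\mathcal U(R)$ torsion; more generally $\mathcal U(F) \subseteq \mathcal U(R)$ must be torsion, and a field with torsion multiplicative group is an algebraic extension of a finite field (its prime field is $\mathbb F_p$ and every element is a root of unity), so in particular $\ch F = p > 0$. Thus $R$ is an algebra over $\mathbb F_p$. Then for $x \in R$, the subring $\mathbb F_p[x]$: again I face the finite-subfield issue, but now I can use that $x \in \mathcal U(R)$ when $x$ is invertible — however a general $x$ need not be a unit. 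Here I would use the Jacobson-type fact that an algebraic algebra over $\mathbb F_p$ in which every element generates a finite ring is periodic, and reduce: $x$ satisfies a monic polynomial over $F$, which is itself algebraic over $\mathbb F_p$, so each coefficient $c_i$ generates a finite subfield $\mathbb F_p(c_i)$; the compositum $F_0$ of these is a finite field; hence $\mathbb F_p[x] \subseteq F_0[x]$ is a finite ring, so $x$ is periodic.

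I expect the main obstacle to be precisely this recurring point: "algebraic over $F$" is weaker than "integral over a finite subring," so in every case the crux is showing that the relevant coefficients lie in a \emph{finite} subfield of $F$ — which works in (3) because $\mathcal U(F)$ torsion forces $F$ to be algebraic over $\mathbb F_p$ (so every finitely generated subfield is finite), and in (1) and (2) because $\ch R = p$ plus the periodic/torsion-unit decompositions let us replace the coefficients by sums of roots of unity living in a finite subfield. Once finiteness of $\mathbb F_p[x]$ (equivalently $F_0[x]$) is secured, periodicity of $x$ is immediate since $\{x^t : t \in \mathbb N\}$ is finite, and applying this to every $x \in R$ yields that $R$ is periodic. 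The characteristic-positivity step feeds from Lemma~\ref{char} in case (2), from Remark~\ref{remark-k} in case (1), and from "$\mathcal U(F)$ torsion $\Rightarrow$ $F$ an algebraic extension of a finite field" (the easy direction being in the spirit of Lemma~\ref{fields}) in case (3).
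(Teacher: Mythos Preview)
Your proposal is correct, though you circle the central point without quite pinning it down: in all three cases the real target is to show that \emph{$F$ itself is periodic}, i.e.\ that every element of $F$ is a root of unity, so that $F$ is algebraic over $\mathbb{F}_p$ (Lemma~\ref{fields}); once that is in hand, any $x\in R$ has a minimal polynomial with coefficients in a finite subfield $F_0\subseteq F$, so $F_0[x]$ is finite and $x$ is periodic. Your final paragraph says exactly this, but the earlier case-by-case discussion (especially in (2), where you split on whether $F_0$ is finite) obscures that the ``$F_0$ infinite'' branch never occurs.

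For (1) and (2) your argument coincides with the paper's. The paper just organizes it more crisply: in (1) it combines Remark~\ref{remark-k} and Lemma~\ref{commute-periodic} to conclude immediately that $F$ is periodic, and in (2) it observes that for $x\in F$ the decomposition $x=u+v$ with $u,v$ periodic has $uv=vu$ by centrality of $x$, so the hypothesis of (1) holds with $k=2$ --- no need to work coefficient-by-coefficient.

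For (3) your route is genuinely different and more self-contained. You argue directly that $\mathcal U(F)\subseteq\mathcal U(R)$ torsion forces every nonzero element of $F$ to be a root of unity, hence $F$ is algebraic over $\mathbb F_p$, and then finish as above. The paper instead splits on $|F|$: if $F$ is finite it argues as in (1), while if $F$ is infinite it invokes an external result (\cite[Corollary~2.9]{2-good}) stating that an algebraic algebra over an infinite field is $2$-good, so $R$ becomes additively $2$-periodic and (2) applies. Your argument avoids this citation entirely and is the cleaner one here.
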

\begin{proof}
To prove (1), by combining Remark~\ref{remark-k} and Lemma~\ref{commute-periodic}, we see that $F$ is periodic. Therefore, by Lemma~\ref{fields}, $F$ is algebraic over a finite field $\mathbb{F}_p$, and so is $R$ as well.  Now, for each $\alpha\in R$, $\mathbb F_p[\alpha]$, the subring of $R$ generated by $\alpha$ over $\mathbb F_p$, is finite. Therefore, $\alpha$ is a periodic element, showing that $R$ is periodic.

Suppose now that $R$ is additively $2$-periodic and choose $x \in F$.  Write $x = u + v$, where $u$ and $v$ are two periodic elements of $R$. Since $x$ is central, the equality $uv = vu$ holds. Then, the assertion~(1) shows that $R$ is periodic, which proves~(2).

Finally, assume that $\mathcal{U}(R)$ is a torsion group. If $F$ is finite, the reasoning is similar to the first part and $R$ is a periodic ring. Thus, we may assume that $F$ is infinite. Then, by \cite[Corollary~2.9]{2-good}, each element of $R$ is a sum of two elements in $\mathcal{U}(R)$, which are torsion by assumption, i.e., $R$ is additively $2$-periodic. Hence, the result follows from the previous part.
\end{proof}


Standardly, for any ring $R$, the symbol ${\rm M}_n(R)$ means the {\it full matrix ring} over $R$ of size $n\geq 1$. Let $D$ be a division ring that is algebraic over its center $F$, and let $n>1$ be any natural number. Although ${\rm M}_n(D)$ may not be algebraic over $F$ (see \cite[Theorem~8.4.1]{Bo_Co_97}), from Theorem~\ref{algebraic}, we can deduce the following result which answers partially Problem~\ref{major}.

\begin{corollary}\label{divisiontwo}
Let $D$ be a division ring which is algebraic over its center $F$. Assume  $n$ and $k$ are   two natural numbers.
\begin{itemize}
\item[(1)] If each element of $F{\rm I}_n$ is a sum of $k$ commuting periodic elements in ${\rm M}_n(D)$, then $D$ is a locally finite field, whence the ring ${\rm M}_n(D)$ is locally finite, too.
\item[(2)] If  ${\rm M}_n(D)$ is additively $2$-periodic, then both of the rings $D$ and  ${\rm M}_n(D)$ are locally finite.
\end{itemize}
\end{corollary}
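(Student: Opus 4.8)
The strategy is to reduce both parts of the corollary to an application of Theorem~\ref{algebraic} by first showing that the center $F$ is a locally finite field, and then boosting from ``$F$ locally finite'' to ``$D$ locally finite'' and finally to ``${\rm M}_n(D)$ locally finite''. For part~(1), the hypothesis says that every element of the central subfield $F{\rm I}_n \cong F$ is a sum of $k$ commuting periodic elements of the ambient ring ${\rm M}_n(D)$. By Remark~\ref{remark-k} this already forces $F$ to have positive characteristic $p > 0$, and then Lemma~\ref{commute-periodic} (applied inside ${\rm M}_n(D)$) shows that each such element of $F{\rm I}_n$ is itself periodic; being a periodic element of a field, it is a root of unity, so $F$ is a periodic field. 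By Lemma~\ref{fields}, $F$ is an algebraic extension of $\mathbb{F}_p$, i.e.\ a locally finite field.

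First I would then argue that a division ring $D$ algebraic over a locally finite central subfield $F$ is itself a locally finite field. That $D$ is commutative is immediate from Kaplansky's theorem already cited in the introduction: $D$ is radical over $F$ since every $x \in D$ is algebraic over the locally finite field $F$, hence $\mathbb{F}_p[x] = F'[x]$ is finite for the finite subfield $F'$ of $F$ generated by the coefficients of the minimal polynomial, so in particular $x^n \in \mathbb{F}_p \subseteq F$ for some $n$; thus $D = F$ is a field. Local finiteness of $D = F$ is then the statement that any finitely generated subring of an algebraic extension of $\mathbb{F}_p$ is finite, which follows because each generator lies in a finite subfield and finitely many finite subfields of a field generate a finite subfield (compositum of finite fields). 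This finishes part~(1) for $D$; for ${\rm M}_n(D)$, any finite subset of ${\rm M}_n(D) = {\rm M}_n(F)$ has all its entries lying in a common finite subfield $F_0 \subseteq F$, and the subring they generate is contained in the finite ring ${\rm M}_n(F_0)$, so ${\rm M}_n(D)$ is locally finite.

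For part~(2), the hypothesis that ${\rm M}_n(D)$ is additively $2$-periodic applies in particular to the central elements $x{\rm I}_n$, $x \in F$; since these are central, any decomposition $x{\rm I}_n = u + v$ into two periodic elements of ${\rm M}_n(D)$ automatically has $uv = vu$, so we are back in the situation of part~(1) with $k = 2$, and the same argument shows $D$ and ${\rm M}_n(D)$ are locally finite. (Alternatively, Lemma~\ref{char} gives positive characteristic directly, and then Theorem~\ref{algebraic}(2) would apply to any algebraic $F$-algebra, but ${\rm M}_n(D)$ need not be algebraic over $F$ by the cited \cite[Theorem~8.4.1]{Bo_Co_97}, which is exactly why one must route through the center rather than invoke Theorem~\ref{algebraic} for ${\rm M}_n(D)$ itself.)

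The only genuinely delicate point is the passage from ``$F$ is locally finite and $D$ is algebraic over $F$'' to ``$D$ is a (locally finite) field'': one must be careful that Kaplansky's theorem requires $D$ to be radical over its \emph{center}, so I would verify explicitly that algebraicity over the locally finite subfield $F$ — which is contained in, but a priori could be smaller than, the full center of $D$ — still yields $x^n \in Z(D)$ for a suitable $n$, using that $F$ itself is radical over $\mathbb{F}_p$. Everything else is routine: reading off Remark~\ref{remark-k}, Lemma~\ref{commute-periodic}, and Lemma~\ref{fields}, plus the elementary fact that a compositum of finitely many finite fields is finite.
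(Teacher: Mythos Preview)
Your argument is correct and follows essentially the same route as the paper's: first show via Remark~\ref{remark-k} and Lemma~\ref{commute-periodic} that $F$ is a periodic (hence locally finite) field, then use algebraicity of $D$ over $F$ to conclude that $D$ is a periodic division ring and therefore a locally finite field, and finally pass to ${\rm M}_n(D)$. The paper compresses the first two steps into a single invocation of Theorem~\ref{algebraic} (applied to the algebraic $F$-algebra $D$) and cites \cite[Corollary~2.3]{Kim} for the local finiteness of ${\rm M}_n(D)$, whereas you unpack both directly; note also that your closing worry about $F$ possibly being smaller than $Z(D)$ is moot, since the hypothesis of the corollary explicitly takes $F$ to be the center of $D$.
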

\begin{proof}
By Theorem~\ref{algebraic}, $D$ is periodic. Therefore, $D$ is a field (that is, locally finite by Lemma~\ref{fields}). Referring to \cite[Corollary 2.3]{Kim}, we obtain that ${\rm M}_n(D)$ is a locally finite ring.
\end{proof}


For some special cases of additively periodic division algebras, we can only demonstrate that the characteristics of such algebras are positive: Following Amitsur \cite{Am}, an algebraic algebra $A$ over a field $F$ will be said to be of locally bounded degree (hereafter, abbreviated as a {\it LBD-algebra} for short) if every finitely generated submodule of $A$ consists of elements with bounded degrees. Apparently, every locally finite-dimensional algebra is an LBD-algebra. According to \cite[Theorem 5]{Am}, every algebraic algebra over an uncountable field is also an LBD-algebra.

\medskip

First, an useful simple lemma is needed.

\begin{lemma}\label{LBD}
Let $K$ be an infinite field and let $D$ be an LBD division $K$-algebra. Then
\begin{itemize}
\item [(i)] for any integer $m \ge 2$ and $m$-tuple $\theta _{1},
\dots , \theta _{m} \in D$, there exist $\tilde m \in \mathbb{N}$ and $\tilde \theta \in D$, such that $[K(\tilde \theta )\colon K] = \tilde m$, $\tilde m$ is divisible by $s_i:=[K(\theta _{i})\colon K]$, $i = 1, \dots , m$, and by $s'_m:=[K(\theta _{m}')\colon K]$, where $\theta _{m}' = \sum _{i=1} ^{m} \theta _{i}$; and
\item [(ii)] $\sum _{i=1} ^{m} {\tilde m}\cdot  s _{i} ^{-1}{\rm Tr}_{K(\theta _{i})/K}(\theta _{i}) = \tilde m\cdot s _{m} ^{\prime -1}{\rm Tr}_{K(\theta _{m}')/K}(\theta _{m}')$.
\end{itemize}
\end{lemma}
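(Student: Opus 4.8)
The plan is to work inside a large common subfield and exploit the compatibility of field traces under tower extensions. First I would reduce to the case $m = 2$: if the claim holds for two elements it extends to $m$ by grouping, since $\theta'_m = (\theta_1 + \cdots + \theta_{m-1}) + \theta_m$ and the LBD hypothesis keeps each partial sum algebraic of finite degree; then an induction on $m$ handles the general statement, with the divisibility requirements in (i) accumulated step by step. So fix $\theta_1, \theta_2 \in D$, set $\theta'_2 = \theta_1 + \theta_2$, and write $s_i = [K(\theta_i):K]$, $s'_2 = [K(\theta'_2):K]$.

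Next I would produce the element $\tilde\theta$. Since $D$ is an LBD $K$-algebra and $K$ is infinite, the finitely generated $K$-submodule (indeed the $K$-subalgebra) generated by $\theta_1, \theta_2$ consists of elements of bounded degree; in particular it is a finite-dimensional division $K$-algebra, hence (being a division ring finite over an infinite field inside $D$) I can find a single primitive element $\tilde\theta$ whose field $K(\tilde\theta)$ contains $K(\theta_1)$, $K(\theta_2)$, and $K(\theta'_2)$. Here one uses that a finite-dimensional division algebra over a field, once we pass to a maximal subfield or invoke that the subfield generated by finitely many commuting-up-to-the-subalgebra elements is itself a field of finite degree — more carefully, $K(\theta_1,\theta_2)$ may be noncommutative, so I would instead take $L$ to be a maximal subfield of the finite-dimensional division $K$-algebra $K\langle\theta_1,\theta_2\rangle$ containing $\theta_1$; then $\theta_2$ need not lie in $L$. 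The cleaner route: take $L = K(\theta_1)$, adjoin $\theta_2$ — but $\theta_1$ and $\theta_2$ may not commute. The honest fix is to argue field-by-field: choose $\tilde\theta$ so that $K(\tilde\theta) \supseteq K(\theta_i)$ as abstract field extensions via embeddings, set $\tilde m = [K(\tilde\theta):K]$, and observe $s_i \mid \tilde m$ and $s'_2 \mid \tilde m$ by the tower law. This gives part (i).

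For part (ii) I would use the transitivity of the trace: for a finite extension $M/K$ factoring through an intermediate field $E$, one has $\mathrm{Tr}_{M/K} = \mathrm{Tr}_{E/K}\circ\mathrm{Tr}_{M/E}$, and for $\theta \in E$ the inner trace is just multiplication, $\mathrm{Tr}_{M/E}(\theta) = [M:E]\,\theta$, so $\mathrm{Tr}_{M/K}(\theta) = [M:E]\,\mathrm{Tr}_{E/K}(\theta)$. Applying this with $M = K(\tilde\theta)$ and $E = K(\theta_i)$ (respectively $E = K(\theta'_2)$) gives
$$\tilde m\cdot s_i^{-1}\,\mathrm{Tr}_{K(\theta_i)/K}(\theta_i) = \mathrm{Tr}_{K(\tilde\theta)/K}(\theta_i), \qquad \tilde m\cdot {s'_2}^{-1}\,\mathrm{Tr}_{K(\theta'_2)/K}(\theta'_2) = \mathrm{Tr}_{K(\tilde\theta)/K}(\theta'_2).$$
Then $\mathrm{Tr}_{K(\tilde\theta)/K}$ is $K$-linear on $K(\tilde\theta)$, and since $\theta'_2 = \theta_1 + \theta_2$ all lie in $K(\tilde\theta)$, additivity yields $\mathrm{Tr}_{K(\tilde\theta)/K}(\theta_1) + \mathrm{Tr}_{K(\tilde\theta)/K}(\theta_2) = \mathrm{Tr}_{K(\tilde\theta)/K}(\theta'_2)$, which is exactly the asserted identity for $m = 2$; the general $m$ follows by the same linearity after the inductive construction of $\tilde\theta$.

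The main obstacle is the first step: ensuring a \emph{single} $\tilde\theta$ whose field simultaneously contains (copies of) $K(\theta_1), \dots, K(\theta_m)$ and $K(\theta'_m)$ inside $D$ itself, because $\theta_1, \dots, \theta_m$ generally do not commute, so $K\langle\theta_1,\dots,\theta_m\rangle$ is a genuine noncommutative (but finite-dimensional, by LBD) division $K$-algebra rather than a field. I expect one resolves this by replacing it with a maximal subfield $\tilde K$ of that finite-dimensional division algebra and noting every $\theta_i$ and $\theta'_m$ is conjugate into $\tilde K$ with the \emph{same} characteristic polynomial over $K$, hence the same degree and trace; the infinitude of $K$ is used to guarantee such a primitive maximal subfield exists (separability/primitive element arguments). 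Verifying that conjugation does not disturb the trace — i.e. that $\mathrm{Tr}_{K(\theta_i)/K}$ depends only on the $K$-conjugacy class of $\theta_i$ in $D$ — is the technical heart, after which the trace-transitivity bookkeeping in (ii) is routine.
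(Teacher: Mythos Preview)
Your inductive reduction to $m=2$ is exactly what the paper does (it cites \cite[Proposition~3.2]{Chip} for $m=2$ and \cite[Proposition~3.1]{Chip} for the inductive step), so the overall architecture is right.

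The gap is in the base case, and you have located it yourself but not closed it. Your argument for (ii) via tower transitivity of the trace requires $\theta_1,\theta_2,\theta_2'\in K(\tilde\theta)$, i.e.\ a single \emph{commutative} subfield of $D$ containing all three elements. As you note, $\theta_1$ and $\theta_2$ need not commute, so this is generally impossible. Your proposed repair---conjugate each $\theta_i$ (and $\theta_2'$) separately into a fixed maximal subfield $\tilde K$ of the finite-dimensional division algebra $A=K\langle\theta_1,\theta_2\rangle$---does not work for two reasons. First, there is no guarantee that $K(\theta_i)$ embeds into a \emph{given} maximal subfield of $A$; maximal subfields of a division algebra need not be isomorphic. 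Second, and more fatally, even if $g_i\theta_i g_i^{-1}\in\tilde K$ for suitable $g_i$, the relation $\theta_1+\theta_2=\theta_2'$ is destroyed by these separate conjugations, so additivity of $\mathrm{Tr}_{\tilde K/K}$ no longer connects the three traces.

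The way through is to use a trace that is already $K$-linear on the noncommutative algebra $A$ itself. Take the left regular representation $\ell_\theta\colon A\to A$ and set $T(\theta)=\mathrm{tr}_K(\ell_\theta)$. Since $A$ is a free left $K(\theta)$-module of rank $[A:K]/s$ with $s=[K(\theta):K]$, one has $T(\theta)=\dfrac{[A:K]}{s}\,\mathrm{Tr}_{K(\theta)/K}(\theta)$ for every $\theta\in A$. Linearity of $T$ then gives
\[
\sum_{i} \frac{[A:K]}{s_i}\,\mathrm{Tr}_{K(\theta_i)/K}(\theta_i)=\frac{[A:K]}{s'_m}\,\mathrm{Tr}_{K(\theta'_m)/K}(\theta'_m),
\]
which is (ii) once you know each $s_i$ and $s'_m$ divides a common $\tilde m$ realised as the degree of some $\tilde\theta\in D$. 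For that divisibility (part (i)) one does take a maximal subfield $L$ of $A$: all maximal subfields have the same $K$-dimension, and each $K(\theta_i)$ sits inside \emph{some} maximal subfield, so $s_i\mid[L:K]$; the infinitude of $K$ is then used to produce a primitive element $\tilde\theta$ with $K(\tilde\theta)=L$. This is presumably the content of Chipchakov's propositions that the paper invokes.
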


\begin{proof}
When $m = 2$, this is contained in \cite[Proposition~3.2]{Chip}; and, in general, the assertions are proved by proceeding standardly with induction on $m$ and utilizing \cite[Proposition~3.1]{Chip}.
\end{proof}


We, thereby, arrive at our next technical claim.

\begin{proposition}\label{FC-field}
Let $K$ be either a local or a global field (more generally, an FC-field in the sense of \cite{Chip}), and let $D$ be an LBD division $K$-algebra. If  $D$ is additively periodic, then $D$ is of positive characteristic.
\end{proposition}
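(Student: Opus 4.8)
The strategy is to mimic the characteristic argument of Lemma~\ref{char}, but replace the single ring homomorphism $\alpha\colon B\to\C$ (which exploited commutativity) with a \emph{trace} computation over the FC-field $K$, which is available precisely because $D$ is an LBD $K$-algebra and hence Lemma~\ref{LBD} applies. Suppose, for contradiction, that $\ch(D)=0$, so that $\Q\subseteq K\subseteq D$. Pick a sufficiently large integer $N$ (e.g.\ $N=3$, or any $N$ big enough for the final numerical contradiction) and, using the hypothesis that $D$ is additively periodic, write $N=\theta_1+\cdots+\theta_m$ with each $\theta_i\in D$ a periodic element; since $D$ is a division ring of characteristic $0$, each periodic $\theta_i$ is a torsion unit, hence a root of unity, so $K(\theta_i)$ is a cyclotomic-type extension and in particular $\theta_i$ is an algebraic integer.

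\textbf{Key steps.} First I would record that each $\theta_i$, being a root of unity, is integral over $\Z$, and that all its conjugates in $\overline{\Q}$ have absolute value $1$ under every archimedean embedding; consequently, with $s_i=[K(\theta_i):K]$, one has the bound $|{\rm Tr}_{K(\theta_i)/K}(\theta_i)|\le s_i$ under any extension of an archimedean place of $K$ to $K(\theta_i)$ (here one uses that $K$, being local or global of characteristic $0$, carries archimedean absolute values — for a local field one takes $K=\R$ or $\C$ directly, for a global field one fixes an archimedean place). Second, apply Lemma~\ref{LBD}: with $\theta_m'=\sum_{i=1}^m\theta_i=N$, there are $\tilde m\in\N$ and $\tilde\theta\in D$ with $[K(\tilde\theta):K]=\tilde m$, divisible by every $s_i$ and by $s_m'=[K(\theta_m'):K]=[K(N):K]=1$, and the trace identity
\[
\sum_{i=1}^m \tilde m\, s_i^{-1}\,{\rm Tr}_{K(\theta_i)/K}(\theta_i) \;=\; \tilde m\, s_m'^{-1}\,{\rm Tr}_{K(\theta_m')/K}(\theta_m').
\]
Since $\theta_m'=N$ lies in $K$, the right-hand side equals $\tilde m\cdot N$. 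Dividing by $\tilde m$ gives $\sum_{i=1}^m s_i^{-1}{\rm Tr}_{K(\theta_i)/K}(\theta_i)=N$. Third, take archimedean absolute values and use the bound from the first step:
\[
N \;=\; \Bigl|\sum_{i=1}^m s_i^{-1}{\rm Tr}_{K(\theta_i)/K}(\theta_i)\Bigr| \;\le\; \sum_{i=1}^m s_i^{-1}\cdot s_i \;=\; m.
\]
This says $N\le m$, which is \emph{not} yet a contradiction by itself — so the actual argument must be run slightly differently: one does not get to choose $N$ after $m$ is known. Instead I would argue more carefully with a normalization. Dividing the original equation $N=\theta_1+\cdots+\theta_m$ is the wrong move; rather, observe that we may first \emph{fix} $N=3$ but then must control $m$. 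The correct fix is to use the trace identity in the form that isolates a single periodic element: writing $1\in K$ as a sum of periodics and iterating, or — cleaner — applying Lemma~\ref{LBD} to the two-term split $N = \theta_1 + (\theta_2+\cdots+\theta_m)$, setting $\psi=\theta_2+\cdots+\theta_m$, and noting $\psi = N-\theta_1$ is again a difference of a scalar and a root of unity, hence $K(\psi)=K(\theta_1)$ and ${\rm Tr}_{K(\psi)/K}(\psi)=N s_1 - {\rm Tr}_{K(\theta_1)/K}(\theta_1)$, which is an \emph{exact} identity requiring no largeness of $N$; combined with $|{\rm Tr}_{K(\theta_1)/K}(\theta_1)|\le s_1$ one gets $|{\rm Tr}_{K(\psi)/K}(\psi)|\ge N s_1 - s_1 = (N-1)s_1$, while independently expanding $\psi$ as a sum of the $m-1$ roots of unity $\theta_2,\dots,\theta_m$ and pushing through the multi-term Lemma~\ref{LBD} bounds $|{\rm Tr}_{K(\psi)/K}(\psi)|\le (m-1)\cdot(\text{something bounded in terms of the }s_i)$ — forcing, for $N$ large relative to the (fixed) decomposition length available, the desired contradiction.

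\textbf{Main obstacle.} The genuinely delicate point is the last one: \emph{additively periodic} only guarantees that each element is a sum of periodics with the number of summands depending on the element, so there is no uniform bound $m$ — one cannot simply take $N\gg m$. The fix is to apply the whole machinery not to a large integer but to a \emph{fixed} small case where the archimedean inequality is already violated, namely to exploit that a nontrivial $\Q$-linear combination of roots of unity whose value lies in $\Z$ must satisfy strong constraints (an averaging/trace argument over $\Gal(\overline\Q/\Q)$, encoded here by the $K$-trace via Lemma~\ref{LBD}) that are incompatible with equalling, say, a rational number of large enough height chosen \emph{before} the decomposition length is revealed; concretely one picks $a\in\Q$ with numerator exceeding the crude archimedean bound that Lemma~\ref{LBD}~(ii) forces for \emph{any} length-$m$ decomposition, observing that the bound in (ii), after dividing by $\tilde m$, is $\bigl|\sum s_i^{-1}{\rm Tr}(\theta_i)\bigr|\le\sum 1 = m$ — still $m$-dependent. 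The honest resolution, which I expect the authors to use, is to run the argument with $N=1$ after clearing denominators: write a fixed nonzero integer $d$ as $d\cdot 1 = \theta_1+\cdots+\theta_m$ is still $m$-dependent, so ultimately one must invoke that $D$ being a \emph{division ring} forces the periodic summands to be roots of unity lying in a \emph{single} cyclotomic tower once one also controls their orders via the LBD/FC hypotheses, collapsing the problem to Lemma~\ref{char}'s commutative situation inside $K(\theta_1,\dots,\theta_m)$. That reduction — showing the $\theta_i$ generate a commutative (indeed abelian Galois) subextension of $D$ over $K$ to which Remark~\ref{remark-k} or Lemma~\ref{char} directly applies — is where the FC-field and LBD hypotheses do their real work, and is the step I would expect to occupy the bulk of the proof.
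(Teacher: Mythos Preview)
Your proposal contains a genuine gap: the archimedean-bound strategy cannot close, and you essentially concede this yourself. The inequality $N\le m$ you derive is correct but vacuous, because \emph{additively periodic} gives no uniform bound on $m$; every attempted fix you sketch (splitting off one summand, choosing $a\in\Q$ of large height, etc.) runs into the same wall. Moreover, your fallback---that the FC/LBD hypotheses should force the $\theta_i$ to generate a commutative subring, reducing to Remark~\ref{remark-k}---is simply false: roots of unity in a noncommutative division ring need not commute, and nothing in the hypotheses makes them do so. (A secondary issue: your archimedean argument presupposes $K$ carries an archimedean place, which fails for $p$-adic local fields.)

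The idea you are missing is that the contradiction is \emph{integrality-based}, not archimedean, and the element to decompose is not a large integer but a rational with a carefully chosen \emph{denominator}. The FC-field/LBD hypotheses enter via \cite[Lemma~3.9]{Chip}: for each prime $p$ there is $k(p)\ge 0$ such that $p^{k(p)+1}$ never divides $[K(\rho):K]$ for $\rho\in D$. One then decomposes $p^{-k(p)-1}=\sum_{i=1}^m\theta_i$ into torsion units and applies Lemma~\ref{LBD}(ii). The right-hand side becomes $\tilde m\cdot p^{-k(p)-1}=\mu_p/p$ with $\mu_p=\tilde m/p^{k(p)}$; crucially $p\nmid\mu_p$ because $\tilde m=[K(\tilde\theta):K]$ is not divisible by $p^{k(p)+1}$. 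The left-hand side is an algebraic integer (each $\mathrm{Tr}_{K(\theta_i)/K}(\theta_i)$ is a sum of roots of unity, and the coefficients $\tilde m/s_i$ are integers). Hence $\mu_p/p\in\Q\setminus\Z$ is an algebraic integer, contradicting that $\Z$ is integrally closed. The point is that the FC/LBD input controls the $p$-part of degrees, which is exactly what one needs to stop the denominator $p^{k(p)+1}$ from being absorbed by $\tilde m$---and this control is completely independent of the unbounded decomposition length $m$.
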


\begin{proof}
Suppose on the contrary that $\ch(K) = 0$ and $D$ satisfies the stated condition. Consulting with \cite[Lemma~3.9]{Chip}, for each prime number $p$, there is $k(p) \ge 0$ such that $p ^{k(p)+1}$ does not divide the degree $[K(\rho )\colon K]$ for any $\rho \in D$. Considering now a presentation of $p ^{-k(p)-1}= \sum _{i=1} ^{m} \theta _{i}$ as a sum of torsion units $\theta_i\in D$ for a fixed $p$. Evidently, $m\geq 2$. Then, one obtains from condition~(ii) of Lemma~\ref{LBD} that
\begin{equation}\label{eq11}
\sum _{i=1} ^{m} {\tilde m}\cdot  s _{i} ^{-1}{\rm Tr}_{K(\theta _{i})/K}(\theta _{i}) = \mu _{p}/p,
\end{equation}
where $\mu _{p}=\tilde m/p ^{k(p)}$.

Now observe that each ${\rm Tr}_{K(\theta _{i})/K}(\theta _{i})$ is an algebraic integer. To see this, assume that $\theta_i^{n_i}=1$, and let $p_i(x)\in K[x]$ be the minimal polynomial of $\theta_i$ over $K$. According to \cite[Proposition~1, p.~44]{Sa} (or to \cite[Theorem~8.2]{Morandi}), one may write that $${\rm Tr}_{K(\theta _{i})/K}(\theta _{i})=x_{i1}+x_{i2}+\cdots+x_{is_i},$$ where each $x_{ij}$ is a root of $p_i(x)$ in some extension field of $K$. Since $p_i(x)$ divides $f_i(x)=x^{n_i}-1$, each $x_{ij}$ is also a root of $f_i(x)$ and, therefore, an algebraic integer. Consequently, each ${\rm Tr}_{K(\theta _{i})/K}(\theta _{i})$ is an algebraic integer as well. Thus, equation~(\ref{eq11}) implies that $\mu _{p}/p$ is an algebraic integer. As $p$ does not divide $\mu_p$, one sees that $\mu _{p}/p$ lies in the set complement $\mathbb{Q} \setminus \mathbb{Z}$. However, our conclusion contradicts \cite[Example~5.0]{Atiyah}, and so $D$ is of positive characteristic, as promised.
\end{proof}


We think that in Proposition~\ref{FC-field}, the positive characteristic cannot occur either.

\begin{problem}
Let $K$ be either a local or a global field, and let $D$ be an LBD division $K$-algebra. Can $D$ be additively periodic?
\end{problem}


We are closing this subsection with a result that is similar to the one known for periodic rings.

\begin{proposition}\label{nil-lift}
Assume that $R$ and $S$ are two rings, and let $I$ be a nil-ideal of $R$. Then,
\begin{itemize}
\item [(1)] $R\times S$ is additively periodic  if, and only if, $R$ and $S$ are additively periodic.\footnote{Note that it is different from the assertion that the direct product of two periodic rings is again a periodic ring (cf. \cite[Remark~3.5]{Cui}).}
\item[(2)] If $\ch (R)>0$ and $R/I$ is additively  $k$-periodic, then $R$ is additively $(k+1)$-periodic.
\end{itemize}
\end{proposition}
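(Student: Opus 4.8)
\textbf{Proof proposal for Proposition~\ref{nil-lift}.}

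The plan is to treat the two parts separately, since they are of quite different character. For part~(1), the forward implication is the nontrivial direction of the first statement; the converse is routine once one observes that periodicity of elements is componentwise. Concretely, if $R\times S$ is additively periodic, I would take an arbitrary $r\in R$ and consider the element $(r,0)\in R\times S$. By hypothesis we can write $(r,0)=\sum_{i=1}^m (a_i,b_i)$ with each $(a_i,b_i)$ periodic in $R\times S$. The key observation is that $(a,b)$ is periodic in $R\times S$ exactly when $a$ is periodic in $R$ and $b$ is periodic in $S$ — indeed if $(a,b)^n=(a,b)^\ell$ with $n>\ell\ge 1$ then $a^n=a^\ell$ and $b^n=b^\ell$; conversely, given $a^{n_1}=a^{\ell_1}$ and $b^{n_2}=b^{\ell_2}$ one can find a common pair of exponents using the standard fact that the sets $\{a^t\}$ and $\{b^t\}$ are finite, so some power-pair works for both simultaneously. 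Hence each $a_i$ is periodic in $R$, and $r=\sum_{i=1}^m a_i$, so $R$ is additively $m$-periodic-at-$r$; symmetrically for $S$. The footnote correctly flags that the analogous statement fails for the property ``periodic'' itself (the global bound need not be uniform), but for ``additively periodic'' we only need finiteness element-by-element, so no uniformity issue arises.

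For part~(2), suppose $\ch(R)=n>0$ and $R/I$ is additively $k$-periodic, with $I$ a nil ideal. Given $x\in R$, look at its image $\bar x\in R/I$ and write $\bar x=\bar y_1+\cdots+\bar y_k$ with each $\bar y_j$ periodic in $R/I$; lift each $\bar y_j$ to some $y_j\in R$. Then $x-(y_1+\cdots+y_k)=:z\in I$, so $z$ is nilpotent; since $\ch(R)=n>0$, a nilpotent element is periodic (indeed $z^N=0=z^{2N}$ for suitable $N$, or simply note $\{z^t\}$ is finite), so $z$ is one periodic summand. It remains to argue that each lift $y_j$ is itself periodic in $R$. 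Here is where positive characteristic is used again: since $\bar y_j$ is periodic in $R/I$, the set $\{\bar y_j^{\,t}:t\in\N\}$ is finite, so there are $t>s\ge 1$ with $y_j^t-y_j^s\in I$, hence $y_j^t-y_j^s$ is nilpotent. Now I would invoke the standard lemma that in a ring of positive characteristic $n$, if $w^t-w^s$ is nilpotent then $w$ is periodic — this follows because $\Z/n[w]$ modulo its nilradical is finite (generated by a periodic image over a finite ring), so $\{w^t\}$ is finite modulo a nilpotent ideal of bounded nilpotency class, forcing $\{w^t\}$ finite; equivalently one can cite the classical fact that such a $w$ satisfies $w^a=w^{a+b}$ for suitable $a,b$. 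Thus each $y_j$ is periodic in $R$, and $x=y_1+\cdots+y_k+z$ exhibits $x$ as a sum of $k+1$ periodic elements.

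The main obstacle I anticipate is the last step of part~(2): cleanly arguing that a lift of a periodic element across a nil ideal is periodic when $\ch(R)>0$. One must be a little careful because $R$ need not be commutative and the subring $\Z/n[y_j]$ generated by a single element together with the constants \emph{is} commutative, so the argument localizes to a commutative ring of positive characteristic with a nil ideal; there the statement ``$w^t-w^s$ nilpotent $\Rightarrow$ $w$ periodic'' is a known and routine fact (it is essentially the observation that periodic elements lift through nil ideals in rings of positive characteristic, which is the classical input also used in the Chacron/Herstein circle of results on periodic rings). I would either cite this directly or include the two-line argument: $w^t-w^s$ nilpotent gives $(w^t-w^s)^M=0$, expand and use that in $\ch(R)=n$ the set $\{w^j\}$ modulo the ideal generated by $w^t-w^s$ is finite (at most $t-1$ elements beyond the ``tail''), combined with nilpotency, to conclude $\{w^j\}$ is finite outright. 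With that lemma in hand, both parts close without further difficulty.
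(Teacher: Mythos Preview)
Your proposal is correct and follows the same overall architecture as the paper: part~(1) via componentwise periodicity of pairs, and part~(2) by lifting each periodic summand through the nil ideal and adding the nilpotent discrepancy as the $(k{+}1)$-st periodic term.

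The one substantive difference is how you justify the key claim in part~(2) that a lift of a periodic element through a nil ideal remains periodic when $\ch(R)>0$. The paper does this by a case reduction: first use the Chinese Remainder Theorem to reduce to prime-power characteristic, then to prime characteristic $p$, where the Freshman's Dream finishes it in one line --- from $(a^m-a^n)^s=0$ pick $p^\ell\ge s$ and obtain $a^{mp^\ell}-a^{np^\ell}=(a^m-a^n)^{p^\ell}=0$. Your route instead localizes to the commutative subring $(\Z/n\Z)[w]$ and argues finiteness: the quotient $(\Z/n\Z)[w]/(w^t-w^s)$ is finite and the principal ideal $(w^t-w^s)$ is nilpotent, so the ring itself is finite and $w$ is periodic. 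Both arguments are valid; the paper's is more explicit (it hands you actual exponents), while yours is more uniform (no case split on the prime factorization of $n$). One small caution: in your first sketch you pass to $(\Z/n\Z)[w]$ modulo its \emph{nilradical}, but a priori you do not know the nilradical is nilpotent until after you have proved the ring is finite --- so that phrasing is circular. Your second sketch, working with the principal ideal $(w^t-w^s)$ (which \emph{is} nilpotent because its generator is and the ring is commutative), is the clean version and is the one to write out.
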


\begin{proof}
(1) We demonstrate that if $x \in R$ and $y \in S$ are periodic, then $(x,y) \in R \times S$ is also periodic. Suppose that $x^k=x^l$ ($k>l$), and $y^m=y^n$ ($m>n$). Therefore, $$x^l=x^k=x^{k-l+l}=x^{2(k-l)+l}=\dots =x^{r(k-l)+l},$$ and
$$y^n=y^m=y^{m-n+n}=y^{2(m-n)+n}=\dots=y^{r(m-n)+n}$$ for any positive integer $r$. Let $s=(k-l)(m-n)$ and $t={\rm max}\{l,n\}$. Thus, it is easy to see that $(x,y)^{s+t}= (x,y)^t$, as desired.

Now, by the observation in the previous paragraph, it is evident that if $R$ and $S$ are additively periodic, so is the direct product $R\times S$. The converse statement is trivially true.

\medskip

(2) We first assert that:

\medskip

\noindent{{\bf Claim.}} {\it If $a+I$ is a periodic element in $R/I$, then $a$ is periodic itself.}

\medskip

(Notice that by \cite[Corollary~3.7]{Cui}, if $R/I$ is a periodic ring, then so is $R$, but this does {\it not} imply our assertion.)

\medskip

To see what we claimed, firstly suppose that $\ch (R)=p$, a prime. Now, there exist two different natural numbers $m$ and $n$ such that $a^m-a^n\in I$. Assume $(a^m-a^n)^s=0$ for some non-negative integer $s$, and choose $l\in \N$ such that $p^l\geq s$. So,
$$0=(a^m-a^n)^{p^l}=a^{mp^l}-a^{np^l},$$
showing that $a$ is a periodic element.

Next, suppose that $\ch(R)=p^n$, for a prime $p$ and a natural $n\in \N$. Putting $J=I+(pR)$, we see that $J$ is a nil-ideal of $R$ and $a+J\in R/J$ is periodic. Since $\ch(R/J)=p$, it follows from the previous part that $a$ is periodic.

Now, assume the general case and let us write $\ch(R)=p_1^{n_1}p_2^{n_2}\dots p_k^{n_k}$, where all $p_i$'s are distinct primes. Thus, as it is well-known, we can write $R\simeq \prod_{j=1}^k R_j$, where, for each $j$, $R_j\simeq R/(p_j^{n_j} R)$ is a ring of characteristic $p_j^{n_j}$. We know that $I=\prod_{j=1}^k I_j$, where, for each $j$, $I_j$ is a nil-ideal of $R_j$. Write $$a=(a_1,a_2,\ldots,a_k),$$ where $a_j\in R_j$. Then, one sees that $a_j+I_j$ is a periodic element of $R_j/I_j$ for all $j$. According to the previous paragraph, $a_j$ is a periodic element in $R_j$ for all $j$. This implies by part~(1) that $a$ is a periodic element of $R$, and the claim is now established.

\medskip

Now, suppose that  $R/I$ is additively $k$-periodic  and let $a\in R$. Hence
$$a+I=\sum_{j=1}^k a_j+I,$$
where $a_j+I$ is a periodic element of $R/I$ for each $j$. Therefore,
$$a=(\sum_{j=1}^k a_j)+b$$
for some $b\in I$. Referring to the above Claim, for each $j$, the element $a_j$ is periodic and, clearly, $b$ is also periodic. This shows that $R$ is additively $(k+1)$-periodic, as required.
\end{proof}

\medskip


\medskip

\subsection{Group rings}

As a direct consequence of Theorem~\ref{algebraic}, it follows that if $F$ is a field and $G$ is a locally finite group such that the group algebra $FG$ is additively $2$-periodic, then $FG$ is  periodic itself. Now, as a  consequence of Theorem~\ref{commuting}, we can state and prove the following more general result.

\begin{corollary}\label{cor2}
Let $R$ be a commutative ring and let $G$ be a locally finite group. If  the group ring $RG$ is additively periodic, then $RG$ is periodic.
\end{corollary}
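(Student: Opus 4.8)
The plan is to transfer the question from the possibly non-commutative ring $RG$ down to the commutative base ring $R$ via the augmentation map, and then to climb back up to $RG$ using local finiteness. Write $\varepsilon\colon RG\to R$ for the augmentation homomorphism, $\varepsilon(\sum_{g}r_g g)=\sum_{g}r_g$; this is a surjective ring homomorphism. If $\beta\in RG$ is periodic, say $\beta^{n}=\beta^{m}$ with $n>m\ge 1$, then $\varepsilon(\beta)^{n}=\varepsilon(\beta)^{m}$, so $\varepsilon(\beta)$ is periodic in $R$. Since $RG$ is additively periodic and $\varepsilon$ is onto, every element of $R$ is a finite sum of periodic elements of $R$; that is, $R$ is additively periodic. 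As $R$ is commutative, Theorem~\ref{commuting} applies and gives that $R$ is a periodic ring.

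Next I would record the elementary fact that a commutative periodic ring is \emph{locally finite} (each finitely generated subring is finite). First, $R$ has positive characteristic: the element $2\cdot 1_R$ is periodic, so $(2\cdot 1_R)^{n}=(2\cdot 1_R)^{m}$ for some $n>m\ge 1$, whence $\ch(R)$ divides $2^{n}-2^{m}\neq 0$. Put $c=\ch(R)>0$, so that the prime subring of $R$ is the finite ring $\Z/c\Z$. For any $r\in R$, a relation $r^{n}=r^{m}$ with $n>m\ge1$ forces $\{1,r,r^{2},\dots\}=\{1,r,\dots,r^{n-1}\}$, so $\Z[r]$ is spanned over $\Z/c\Z$ by finitely many elements and is therefore finite; likewise $\Z[r_{1},\dots,r_{s}]$ is finite for any $r_{1},\dots,r_{s}\in R$, since it is spanned over $\Z/c\Z$ by the (finitely many) monomials in the $r_{i}$. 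Hence $R$ is locally finite.

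Finally I would finish as follows. Let $\alpha\in RG$ be arbitrary, with finite support $\supp(\alpha)=\{g_{1},\dots,g_{t}\}$ and coefficients $r_{1},\dots,r_{t}\in R$. Since $G$ is locally finite, $H=\langle g_{1},\dots,g_{t}\rangle$ is a finite subgroup of $G$; since $R$ is locally finite, $R_{0}=\Z[r_{1},\dots,r_{t}]$ is a finite subring of $R$. Then $\alpha$ lies in the group ring $R_{0}H$, which is a free $R_{0}$-module of rank $|H|$ and hence a finite ring; consequently the powers of $\alpha$ cannot all be distinct, so $\alpha$ is periodic. As $\alpha$ was arbitrary, $RG$ is periodic.

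The only real subtlety is precisely the non-commutativity of $RG$: Theorem~\ref{commuting} cannot be invoked for $RG$ itself, and the augmentation map is exactly what lets us pass to the commutative ring $R$ where it does apply. The supporting statement that commutative periodic rings are locally finite — in essence the fact that such a ring has positive characteristic and is then integral over the finite ring $\Z/c\Z$ — is elementary but is the technical heart of the ``climb back up'' step, and could be isolated as a separate lemma if desired. No difficulty is expected in the group-ring bookkeeping, since local finiteness of both $R$ and $G$ immediately confines any single element of $RG$ to a finite subring.
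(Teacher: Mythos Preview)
Your proof is correct and follows the same overall strategy as the paper: use the augmentation map to push additive periodicity down to the commutative ring $R$, invoke Theorem~\ref{commuting} to get that $R$ is periodic, and then exploit local finiteness to conclude that $RG$ is periodic. The only difference is in the ``climb back up'': the paper passes through $R/\mathrm{Nil}(R)$ (citing Hirano for its local finiteness), shows $(R/\mathrm{Nil}(R))G$ is locally finite, and then lifts periodicity across the nil-ideal $\mathrm{Nil}(R)G$ via \cite[Corollary~3.7]{Cui}; you instead prove directly that a commutative periodic ring is itself locally finite and trap each $\alpha\in RG$ inside a finite subring $R_{0}H$. Your route is more self-contained and avoids the nil-ideal detour, at the cost of reproving a special case of Hirano's result by hand.
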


\begin{proof}
We know that $$RG/w(RG)\simeq R,$$ where $w(RG)$ is the  augmentation ideal of $RG$. Therefore, $R$ is additively periodic. As $R$ is commutative, Theorem~\ref{commuting} ensures that $R$ is a periodic ring. Now, according to \cite[Corollary 1.3]{ABD}, $RG$ is a periodic ring; however, since \cite{ABD} has not been published yet, we will provide a sketch of the proof here in order to convince the reader: in fact, observe that the quotient-ring $R/{\rm Nil}(R)$ is locally finite by \cite[Corollary 2]{Hirano}. Then, owing to \cite[Proposition 2.12]{Kim}, the ring $$RG/{\rm Nil(R)}G\simeq (R/{\rm Nil}(R))G$$ is locally finite, too. However, since $R$ is a commutative ring, it is easy to see that ${\rm Nil}(R)G$ is a nil-ideal of $RG$. Consequently, \cite[Corollary~3.7]{Cui} assures that $RG$ is periodic, as asserted.
\end{proof}


Our main result on this occasion states the following.

\begin{theorem}\label{nilpotent}
Let $R$ be a commutative ring and let $G$ be a nilpotent group. Then, the following three statements are equivalent:
\begin{itemize}
\item [(1)] $RG$ is additively $2$-periodic.
\item [(2)] $RG$ is periodic.
\item [(3)] $R$ is periodic and $G$ is locally finite.
\end{itemize}
\end{theorem}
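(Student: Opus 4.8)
The plan is to prove the cycle of implications $(2)\Rightarrow(1)\Rightarrow(3)\Rightarrow(2)$. The implications $(2)\Rightarrow(1)$ is immediate since every periodic ring is additively $1$-periodic, hence additively $2$-periodic. For $(3)\Rightarrow(2)$, I would argue exactly as in the proof of Corollary~\ref{cor2}: if $R$ is periodic (hence $R/{\rm Nil}(R)$ is locally finite by \cite[Corollary~2]{Hirano}) and $G$ is locally finite, then $(R/{\rm Nil}(R))G$ is locally finite by \cite[Proposition~2.12]{Kim}, and since ${\rm Nil}(R)G$ is a nil-ideal of the commutative-coefficient group ring $RG$, \cite[Corollary~3.7]{Cui} gives that $RG$ is periodic. (Note that nilpotence of $G$ is not needed for this direction; only local finiteness is used.)

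The substance of the theorem is therefore $(1)\Rightarrow(3)$. First, passing to the quotient $RG/w(RG)\simeq R$ shows that $R$ is additively $2$-periodic, hence periodic by Theorem~\ref{commuting} (it is commutative); in particular, by Lemma~\ref{char}, $\ch(R)=n>0$, and by Lemma~\ref{fields} each homomorphic-image field of $R$ is algebraic over a finite prime field. It remains to show $G$ is locally finite. The natural strategy is to show every element $g\in G$ has finite order, and then invoke the fact that a nilpotent torsion group is locally finite (indeed locally nilpotent, and a finitely generated nilpotent torsion group is finite). So fix $g\in G$; I want to show $g$ has finite order. Consider the group element $g\in \mathcal{U}(RG)$, viewed inside the subring $R\langle g\rangle=R[g,g^{-1}]$. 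Since $RG$ is additively $2$-periodic, write $g = u+v$ with $u,v\in RG$ periodic. The difficulty is that $u,v$ need not commute with each other or lie in any small subring; this is where the hypotheses (commutative $R$, nilpotent $G$) must be exploited to gain control.

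The key reduction I would pursue: it suffices to handle $G$ cyclic (or finitely generated abelian), because local finiteness of a nilpotent group reduces, via its upper central series and the fact that central extensions of locally finite by locally finite groups are locally finite, to controlling the torsion of $G/G'$ and of the center — more carefully, one shows $G$ is torsion by showing each $g$ generates a finite subgroup, and $\langle g\rangle$ is cyclic. For $G=\langle g\rangle$ infinite cyclic, $RG=R[t,t^{-1}]$ is commutative, so additively $2$-periodic plus commutative forces (Theorem~\ref{commuting}) that $R[t,t^{-1}]$ is periodic; but $t\in R[t,t^{-1}]$ periodic means $t^k=t^m$ for some $k>m$, i.e. $t^{k-m}=1$ in $R[t,t^{-1}]$, which is impossible unless $R=0$ — contradiction, so $g$ has finite order. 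For general abelian $G$ the same argument applies to the (commutative) ring $RG$ directly: it is periodic, and a periodic group ring of a commutative ring forces the group to be torsion (the image of each $g$ being periodic in $RG$). Then, for nilpotent $G$, I would induct on the nilpotency class: the central subgroup $Z=Z(G)$ is torsion by the abelian case applied inside $RG$ (elements of $Z$ are central, so the subring they generate with $R$ is commutative), and $G/Z$ is nilpotent of smaller class with $R(G/Z)$ still additively $2$-periodic (it is a quotient of $RG$), hence $G/Z$ is locally finite by induction; a torsion-by-locally-finite nilpotent group is locally finite, giving $G$ locally finite.

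The main obstacle I anticipate is making the step "$G$ abelian $\Rightarrow$ $G$ torsion" fully rigorous when $G$ is merely abelian rather than cyclic and $R$ has zero divisors or is not a domain: one must check that periodicity of the commutative ring $RG$ genuinely forces each $g\in G$ to have finite order, which should follow because the natural map $\Z[g]\to RG$ (or better, reducing $R$ modulo a maximal ideal to land in a field $\bar R$ algebraic over $\mathbb{F}_p$, giving a group ring $\bar R G$ over a field, where periodicity of the element $g$ in $\bar R G$ classically forces $g$ torsion) can be arranged; handling the characteristic $p^k$ / mixed situation may require the $R\simeq \prod R/(p_j^{n_j}R)$ decomposition as in Proposition~\ref{nil-lift}. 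The induction bookkeeping on nilpotency class, and citing the standard fact that a nilpotent torsion group is locally finite, are routine by comparison.
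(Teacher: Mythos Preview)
Your outline matches the paper's proof almost exactly: the cycle $(2)\Rightarrow(1)\Rightarrow(3)\Rightarrow(2)$, the quotient $RG/w(RG)\simeq R$ to get $R$ periodic, induction on the nilpotency class handling $Z(G)$ and $G/Z(G)$ separately, and the closure under extensions of locally finite groups. The abelian base case is handled in the paper by invoking Theorem~\ref{commuting} (so $RG$ is periodic) and then citing \cite[Proposition~3.4]{Chin} for ``$RG$ periodic $\Rightarrow$ $G$ torsion''; your maximal-ideal reduction is a fine alternative and your worry there is not a real obstacle.

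The one place your write-up is imprecise is the step for $Z(G)$. You say ``by the abelian case applied inside $RG$ (elements of $Z$ are central, so the subring they generate with $R$ is commutative)''. But commutativity of $RZ(G)$ is not what you need, and you do \emph{not} know that $RZ(G)$ is additively $2$-periodic as a ring in its own right, so you cannot literally re-run the abelian case on it. What actually works---and what the paper does---is the centrality-forces-commutation trick: for any $\alpha\in RZ(G)$ (indeed any central $\alpha\in RG$), write $\alpha=\beta_1+\beta_2$ with $\beta_i\in RG$ periodic; since $\alpha$ is central and $\beta_2=\alpha-\beta_1$, one gets $\beta_1\beta_2=\beta_2\beta_1$, and then Lemma~\ref{commute-periodic} (using $\ch(R)>0$ from Lemma~\ref{char}) gives that $\alpha$ itself is periodic. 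Hence $RZ(G)$ is periodic and $Z(G)$ is locally finite. Once you replace your parenthetical with this observation, your argument is complete and coincides with the paper's.
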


\begin{proof}
The verification of (3) implying (2) follows from \cite[Corollary 1.3]{ABD}, and it is evident that (2) implies (1). Therefore, it suffices to prove only the implication (1)~$\Rightarrow$~(3). To that purpose, suppose that $RG$ is additively $2$-periodic. Based on what we observed in the proof of Corollary~\ref{cor2}, $R$ is periodic.

Next, we claim that $G$ is locally finite. Let $G$ be nilpotent of class $n$. We shall prove our claim by induction on $n$. For $n=1$, the group $G$ is Abelian. Thus, invoking Theorem~\ref{commuting}, $RG$ is periodic. Therefore, by \cite[Proposition~3.4]{Chin}, the group $G$ is torsion (= locally finite).

Now, assume that the claim is true for all nilpotent groups of class less than $n$. Note that $G/{\rm Z}(G)$ is a nilpotent group of class $n-1$. Additionally, each element of $$R(G/{\rm Z}(G))\cong RG/[RG.w(R({\rm Z}(G)))],$$ as being a homomorphic image of $RG$ (cf. \cite[Corollary~3.3.5]{Sehgal}), can be expressed as a sum of two periodic elements. Therefore, by hypothesis, the factor-group $G/{\rm Z}(G)$ is locally finite.

Furthermore, we intend to show that $R{\rm Z}(G)$ is periodic. In fact, if $\alpha\in R{\rm Z}(G)$, then there are two elements $\beta_1$ and $\beta_2$ in $RG$ such that $\alpha=\beta_1+\beta_2$. As $\alpha$ is central in $RG$, one inspects that $\beta_1$ commutes with $\beta_2$. Therefore, $\alpha$ is periodic by Lemma~\ref{commute-periodic}. Hence, $R{\rm Z}(G)$ is periodic, as well, and so ${\rm Z}(G)$ is locally finite.

Finally, by what we have established so far, both of the two groups ${\rm Z}(G)$ and $G/{\rm Z}(G)$ are locally finite. Consequently, according to a result due to Schmidt (see, e.g., \cite[Statement 14.3.1]{robinson}), the group $G$ is also locally finite, as desired.
\end{proof}

\medskip


\medskip

\subsection{Triangular and full matrix rings}

As usual, the symbol $\mathbb{T}_n(R)$ is reserved for the (upper or lower) {\it triangular matrix ring} of size $n$, where $n\in \mathbb{N}$.

\medskip

Our pivotal achievement here states as follows.

\begin{theorem}\label{triang}
Suppose $R$ is a ring and $n\geq 1$.
\begin{itemize}
\item [(1)] If $R$ is additively periodic, then so is ${\rm M}_n(R)$.
\item [(2)]  Let $R$ be commutative. Then ${\rm M}_n(R)$ is additively $2$-periodic if, and only if, ${\rm M}_n(R)$ is periodic if, and only if, $R$ is periodic.
\item [(3)] The ring $\mathbb{T}_n(R)$ is additively periodic  if, and only if, $R$ is additively  periodic.
\item[(4)] Let $R$ be commutative. Then $\mathbb{T}_n(R)$ is additively  periodic if, and only if, $\mathbb{T}_n(R)$ is periodic if, and only if, $R$ is periodic.
\end{itemize}
\end{theorem}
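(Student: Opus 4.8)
The plan is to prove Theorem~\ref{triang} in four parts, reusing the commutative machinery of Subsection~2.1 as much as possible.

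For part~(1), the idea is to observe that if $A \in {\rm M}_n(R)$ is any matrix, then each entry $a_{ij}$ can be written as a sum of $m(a_{ij})$ periodic elements of $R$; taking $k$ to be the maximum of all these lengths, we can write each entry as a sum of exactly $k$ periodic elements (padding with zeros, which are periodic). Thus $A = \sum_{t=1}^{k} A_t$ where each $A_t$ has entries that are periodic elements of $R$. It then suffices to show that a matrix each of whose entries is periodic is itself additively periodic in ${\rm M}_n(R)$. For a matrix $A_t = (b_{ij})$ with each $b_{ij}$ periodic, write $A_t = \sum_{i,j} b_{ij} E_{ij}$, where $E_{ij}$ is the standard matrix unit. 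Each summand $b_{ij}E_{ij}$ is periodic in ${\rm M}_n(R)$: indeed $(b_{ij}E_{ij})^2 = 0$ when $i \neq j$, so these are nilpotent, hence periodic; and when $i = j$ we have $(b_{ii}E_{ii})^\ell = b_{ii}^\ell E_{ii}$, which is periodic because $b_{ii}$ is. So $A_t$ is a sum of $n^2$ periodic elements, and $A$ is a sum of $kn^2$ periodic elements, giving additive periodicity of ${\rm M}_n(R)$.

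For part~(2): if ${\rm M}_n(R)$ is additively $2$-periodic, then by Lemma~\ref{char} it has positive characteristic, hence so does $R$ (as a subring via scalar matrices). Restricting to scalar matrices $rI_n$ with $r \in R$, each such matrix is a sum of two periodic matrices whose sum is central, hence (since the centralizer argument applies) the two summands commute; by Lemma~\ref{commute-periodic} applied in ${\rm M}_n(R)$, $rI_n$ is periodic in ${\rm M}_n(R)$, which forces $r$ to be periodic in $R$ (the powers of $rI_n$ being finite in number means the powers of $r$ are). So $R$ is periodic. Conversely, if $R$ is periodic and commutative, then $R/{\rm Nil}(R)$ is a commutative reduced periodic ring, hence (by \cite[Corollary~2]{Hirano}) locally finite, so ${\rm M}_n(R/{\rm Nil}(R)) \cong {\rm M}_n(R)/{\rm M}_n({\rm Nil}(R))$ is locally finite by \cite[Proposition~2.12]{Kim}, and since ${\rm M}_n({\rm Nil}(R))$ is a nil ideal of ${\rm M}_n(R)$ (here commutativity of $R$ is used, so that ${\rm Nil}(R)$ is an ideal and the matrix ideal over it is nil), \cite[Corollary~3.7]{Cui} yields that ${\rm M}_n(R)$ is periodic. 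That ``periodic $\Rightarrow$ additively $2$-periodic'' is trivial, closing the cycle of equivalences.

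For part~(3): if $R$ is additively periodic, then $\mathbb{T}_n(R)$ is additively periodic by an argument parallel to part~(1) — write an upper triangular matrix as a sum over its (triangular) entry positions, note the off-diagonal matrix units are nilpotent in $\mathbb{T}_n(R)$ and the diagonal ones behave as before, and each diagonal entry is a sum of $k$ periodic elements of $R$. Conversely, if $\mathbb{T}_n(R)$ is additively periodic, then projecting onto the $(1,1)$-diagonal entry (a ring homomorphism $\mathbb{T}_n(R) \to R$) shows $R$ is additively periodic. Part~(4) then follows by combining part~(3) with part~(2)'s strategy: if $\mathbb{T}_n(R)$ is additively periodic with $R$ commutative, part~(3) gives $R$ additively periodic, hence periodic by Theorem~\ref{commuting}; then $R$ periodic and commutative implies, by the same nil-ideal-plus-local-finiteness argument as in part~(2) (now the nil ideal is the strictly-upper-triangular part together with ${\rm Nil}(R)$ on the diagonal, and the quotient is $\prod^n R/{\rm Nil}(R)$, a finite product of locally finite rings), that $\mathbb{T}_n(R)$ is periodic; and periodic trivially implies additively periodic. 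The main obstacle I anticipate is the verification in the converse directions of (2) and (4) that the relevant matrix/triangular ideal over ${\rm Nil}(R)$ really is a nil ideal and that \cite[Corollary~3.7]{Cui} applies — this is exactly where commutativity of $R$ is essential, and it is the only genuinely non-formal point; the rest is bookkeeping with matrix units and the already-established Lemmas~\ref{char}, \ref{commute-periodic} and Theorem~\ref{commuting}.
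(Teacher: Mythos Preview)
Your argument is correct and follows the same overall strategy as the paper. The only notable difference is in part~(1): you decompose an arbitrary matrix entrywise into $kn^{2}$ periodic summands $b_{ij}E_{ij}$, whereas the paper writes $(a_{ij})=A+B+C$ with $A$ strictly upper-triangular, $B$ strictly lower-triangular (both nilpotent, hence periodic as single matrices), and $C$ diagonal; then it uses Proposition~\ref{nil-lift}(1) to observe that a diagonal matrix with periodic entries is itself a single periodic element, so $C$ decomposes as a sum of only $k$ periodic diagonal matrices. This yields $k+2$ periodic summands rather than $kn^{2}$, which is exactly the sharper bound recorded in the Remark following the theorem. Your approach is more elementary (it avoids Proposition~\ref{nil-lift}), while the paper's buys the quantitative refinement.

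For parts~(2) and~(4), where the paper simply cites \cite[Corollary~2.5]{ABD} and \cite[Corollary~3.7]{Cui} for ``$R$ commutative periodic $\Rightarrow$ ${\rm M}_n(R)$ (resp.\ $\mathbb{T}_n(R)$) periodic'', you instead unpack the argument via ${\rm Nil}(R)$ and local finiteness of $R/{\rm Nil}(R)$; this is precisely the content of those cited results, so the two routes coincide. One small remark: the reference for ``${\rm M}_n$ of a locally finite ring is locally finite'' in this paper is \cite[Corollary~2.3]{Kim} rather than Proposition~2.12 (which is used for group rings), though the fact is elementary in any case.
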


\begin{proof}
To prove assertion (1),  assume that $R$ is additively periodic. Let $(a_{ij})\in {\rm M}_n(R)$. We have  $$(a_{ij})=\sum_{1\le i, j\le n}a_{ij}e_{ij},$$ where $e_{ij}$ is the elementary matrix in which the $(i,j)$-entry is $1$ and all other entries are $0$. Putting
$$A=\sum_{1\le i<j\le n}a_{ij}e_{ij}, ~~~B=\sum_{1\le j<i\le n}a_{ij}e_{ij}~ \text{~and~}~ C=\sum_{1\le i\le n}a_{ii}e_{ii},$$
we see that $(a_{ij})=A+B+C$.

Observe that, $A$ and $B$ are nilpotent, so they are periodic. Additionally, since $R$ is additively periodic,  one has
$$a_{ii}=\sum_{\ell=1}^k {b_{i\ell}},$$
where $k$ is a natural number and $b_{i\ell} \in R$ is periodic  for all $1\le i\le n$ and  $1\leq\ell\leq k$. Now, we can write
$$C=\sum_{\ell=1}^k\left(\sum_{i=1}^n {b_{i\ell}}e_{ii}\right).$$

It is easy to see that for each $i$ and $\ell$, the element ${b_{i\ell}}e_{ii}$ is periodic. Further, by Proposition~\ref{nil-lift}(1), the diagonal matrix $\sum_{i=1}^n {b_{i\ell}}e_{ii}$ is periodic. Therefore, $C$ is also a periodic element, and hence $(a_{ij})$ is a sum of periodic elements, as required.

Let $R$ be commutative and let ${\rm M}_n(R)$ be additively $2$-periodic. If $x\in R$, then, considering $x$ as an element in ${\rm M}_n(R)$, it can be expressed as the sum of two periodic elements $\alpha$ and $\beta$ in ${\rm M}_n(R)$. Additionally, since $x$ is central, $\alpha\beta=\beta\alpha$. As a result, referring to Lemmas~\ref{char} and \ref{commute-periodic}, $x$ is periodic. Consequently, $R$ is a periodic ring, implying that ${\rm M}_n(R)$ is also periodic  (see, e.g.,  \cite[Corollary~2.5]{ABD}). This proves (2).

For assertion (3), we know that there is a nil-ideal $I$ of $\mathbb{T}_n(R)$ such that $$\mathbb{T}_n(R)/I\simeq R\times \dots \times R,$$ where the direct product is taken $n$ times. This shows that if $\mathbb{T}_n(R)$ is additively periodic, then so does $\mathbb{T}_n(R)/I$ and thus, $R$ is additively periodic as being an epimorphic image. The converse follows directly from (1).

Finally, assertion (4) follows from combination of point (3), Theorem~\ref{commuting}, \cite[Remark~3.5 and Corollary~3.7]{Cui}.
\end{proof}


\begin{remark}
At this stage, we are unready to decide whether or not the converse of Theorem~\ref{triang} (1) is true and, if yes, the proof seems to be technically difficult.

Besides, it is still unknown whether or not ${\rm M}_n(R)$ is periodic provided that $R$ is periodic (see, e.g., \cite{ABD}). However, as the proof of Theorem~\ref{triang} shows, if $R$ is periodic, then each $(a_{ij})\in{\rm M}_n(R)$ is a sum of one periodic element, $\diag(a_{11},a_{22},\ldots, a_{nn})$, and at most $2$ nilpotents (whence, $(a_{ij})$ is a sum of at most $3$ periodic elements). More precisely,  if $R$  is additively $k$-periodic, then ${\rm M}_n(R)$ and $\mathbb{T}_n(R)$  are additively  $(k+2)$-periodic and $(k+1)$-periodic, respectively.
\end{remark}

\medskip


\medskip

\subsection{The torsion product property}

In this subsection, we investigate the rings with the torsion product property, where every element is a sum of a finite number of  torsion elements.

For any ring $R$, we denote by $\mathcal{TU}(R)$ the set of elements of finite order in $\mathcal{U}(R)$. We shall say that $R$ has the {\it torsion product property} (or, briefly, that $R$  has {\it t.p.p}) whenever $\mathcal{TU}(R)$ is a subgroup of $\mathcal{U}(R)$. The torsion product property has been studied in various important contexts -- e.g., for associative division rings and matrix rings over division rings, we refer to \cite{CPM}; in group rings, we refer to \cite{Bist, Coel}; and in alternative loop algebras over fields, we refer to \cite{Good}.

We say that the ring $R$ is {\it additively torsion} if every element of $R$ is a sum of a finite number of torsion elements, and {\it additively $2$-torsion} refers to the ring $R$ in which each element is a sum of two torsion elements.

\medskip
We begin with the following useful technical claim.

\begin{lemma}\label{t.p.p}
Let the ring $R$ have t.p.p and $\ch (R)=p$, a prime number. If $R$ is additively torsion, then $R$ is a locally finite field.
\end{lemma}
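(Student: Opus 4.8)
The plan is to exploit commutativity first, then leverage t.p.p.\ to reduce from an arbitrary sum of torsion units to a single torsion unit. First I would argue that $R$ must be commutative: since $R$ has characteristic $p$ and every element is a sum of torsion units, the key point is that for any torsion unit $u$ we have $u^{p^k}=1$ for a suitable $k$ (because the multiplicative order of $u$ is coprime to... actually one must be careful: orders need not be $p$-powers). Instead, the cleaner route is to observe that any torsion unit $u$ with $u^m=1$ satisfies that $\mathbb{F}_p[u]$ is a finite commutative ring, hence $u$ is a periodic element; more usefully, $u$ lies in a finite subring. So every element of $R$ lies in a finitely generated (as a ring over $\mathbb{F}_p$) subring generated by finitely many torsion units. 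The subtlety is that distinct torsion units need not commute, so such a subring need not be finite a priori. Here is where t.p.p.\ enters: if $u,v\in\mathcal{TU}(R)$, then $uv^{-1}\in\mathcal{TU}(R)$, so $uv^{-1}$ has finite order, which constrains how $u$ and $v$ interact. I would try to show that t.p.p.\ together with $\ch R=p$ forces $R$ to be commutative, or at least forces every finitely generated subring arising as above to be finite.

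The main line I would pursue: take $a\in R$ arbitrary and write $a=t_1+\cdots+t_r$ with each $t_i\in\mathcal{TU}(R)$. Since $R$ has t.p.p., the set $\mathcal{TU}(R)$ is a (torsion, hence locally finite by a theorem of... well, a torsion subgroup of a multiplicative group of a ring of prime characteristic is locally finite — this follows because a finitely generated torsion subgroup sits inside a finite-dimensional algebra over $\mathbb{F}_p$; more precisely the group algebra argument) group. Consequently the subgroup $H=\langle t_1,\dots,t_r\rangle\le\mathcal{TU}(R)$ is a finitely generated torsion group contained in the multiplicative group of the $\mathbb{F}_p$-subalgebra it generates; by a classical result (Burnside-type / Jordan–Schur in the linear case, or directly: a finitely generated torsion subgroup of $\mathrm{GL}_n$ over a field of positive characteristic is finite) $H$ is finite. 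Hence $S:=\mathbb{F}_p[H]=\mathbb{F}_p[t_1,\dots,t_r]$ is a finite-dimensional $\mathbb{F}_p$-algebra, so $S$ is a finite ring, and therefore $a\in S$ is periodic. Since $a$ was arbitrary, $R$ is periodic; being additionally additively torsion with t.p.p., one then upgrades to \textquotedblleft locally finite field\textquotedblright\ as follows: every finitely generated subring is finite (same argument), so $R$ is locally finite; a locally finite ring with t.p.p.\ in which every element is a sum of units has trivial Jacobson radical considerations forcing it to be a (locally finite) field — here one invokes that a finite ring that is additively generated by its units and has the property that all torsion units form a group is commutative and reduced, hence a finite field, and a direct/filtered limit of finite fields is a locally finite field.

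The step I expect to be the genuine obstacle is proving that the finitely generated torsion subgroup $H=\langle t_1,\dots,t_r\rangle$ of $\mathcal{U}(R)$ is actually \emph{finite}. Without t.p.p.\ this can fail spectacularly (the Remark already shows $\mathrm{End}_F(V)$-type phenomena), so t.p.p.\ must be doing real work: it guarantees $H\subseteq\mathcal{TU}(R)$, i.e.\ every element of $H$ has finite order, but finite generation plus being torsion is not enough for finiteness in general (Golod–Shafarevich / Burnside groups) — one needs the embedding of $H$ into a multiplicative group of a ring of characteristic $p$. The resolution is that $\mathbb{F}_p[H]$ is an $\mathbb{F}_p$-algebra in which $H$ is a torsion group of units, and one must show this algebra is finite-dimensional; the honest way is to note $\mathbb{F}_p[H]$ is a quotient of the group algebra $\mathbb{F}_pH$, and a finitely generated torsion group whose group algebra over $\mathbb{F}_p$ embeds (via the regular representation is not faithful here) — so instead one uses that $H$, being a finitely generated torsion subgroup of the unit group of \emph{some} ring, is linear over a field of characteristic $p$ if that ring is Noetherian/finite-dimensional, which is circular. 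The clean fix: first handle the case $R$ finitely generated as a ring, where $R$ being additively torsion forces $R$ itself to be generated by finitely many torsion units, hence $R=\mathbb{F}_p[t_1,\dots,t_r]$; now use t.p.p.\ to see that every element of $R$ is a unit times... — at this point I would appeal to whatever structural lemma the authors have available (likely that a ring generated by finitely many torsion units, with t.p.p., in characteristic $p$, is finite, proved via the Shirshov/height theorem or via reduction to the commutative case by showing $[u,v]$ is nilpotent and central). This reduction to the finitely generated, hence finite, case is the crux; everything after it (periodicity $\Rightarrow$ local finiteness $\Rightarrow$ being a locally finite field) is routine given Lemma~\ref{fields} and standard facts about finite rings additively generated by units.
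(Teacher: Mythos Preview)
Your plan has a genuine gap at exactly the point you flag: proving that the finitely generated torsion subgroup $H=\langle t_1,\dots,t_r\rangle\le\mathcal{TU}(R)$ is finite. You correctly observe that ``finitely generated $+$ torsion'' does not imply finite (Golod--Shafarevich), and your attempts to repair this---linearity over a field, quotients of $\mathbb{F}_pH$, Shirshov-type height arguments---are either circular (they presuppose $\mathbb{F}_p[H]$ is finite-dimensional, which is the goal) or require hypotheses you do not have. You end by hoping the paper supplies a structural lemma at this spot; it does not. So as it stands the argument does not close.

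The paper bypasses this entirely with a short Frobenius trick that never touches the Burnside problem. It shows directly that a sum of \emph{two} torsion units is again torsion, and then inducts. Given torsion units $a,b$, t.p.p.\ gives $a^{-1}b\in\mathcal{TU}(R)$, so the commutative subring $\mathbb{F}_p[a^{-1}b]$ is a finite field; pick $m$ with $(a^{-1}b)^{p^m}=a^{-1}b$. Since $\ch R=p$, the Frobenius gives
\[
(1+a^{-1}b)^{p^m}=1+(a^{-1}b)^{p^m}=1+a^{-1}b,
\]
so $1+a^{-1}b$ is torsion (or zero). Hence $a+b=a(1+a^{-1}b)$ is a product of two elements of $\mathcal{TU}(R)$ and is torsion by t.p.p.\ again. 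By induction every nonzero element of $R$ is a torsion unit, so $R$ is a division ring in which every nonzero element has finite multiplicative order; this forces $R$ to be a field, and by Lemma~\ref{fields} a locally finite one. The key idea you are missing is to factor $a+b=a(1+a^{-1}b)$ and use characteristic $p$ to control $1+a^{-1}b$ via Frobenius---this reduces everything to a \emph{single} torsion element at each step, so no multi-generator Burnside-type issue ever arises.
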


\begin{proof} Choose any $0\neq \alpha\in R$. Assume that there are two torsion elements $a$ and $b$ in $\mathcal{TU}(R)$ such that $\alpha=a+b$. Since $R$ has t.p.p, it must be that $b^{-1}a \in \mathcal{TU}(R)$. As $\mathbb F_p(a^{-1}b)$ is a finite field, there exists a positive integer $m$ such that $$(a^{-1}b)^{p^m}=a^{-1}b,$$ and hence $$(1+a^{-1}b)^{p^m}= 1+a^{-1}b.$$ This shows that $1+a^{-1}b$ is a torsion element. Accordingly, $$\alpha=a( 1+a^{-1}b)$$ is also torsion. We, thus, have shown that the sum of two torsion elements is again a torsion element. Therefore, by induction, we can deduce that every non-zero element of $R$ is a torsion unit. Consequently, $R$ is a locally finite field, as claimed.
\end{proof}


We say that the ring $R$ has {\it strongly t.p.p} if, for each nil-ideal $I$ of $R$, the factor ring $R/I$ has t.p.p. For example, if $\mathcal{U}(R)$ is a locally nilpotent group, then $R$ has strongly t.p.p: For if $I$ is a nil-ideal, then $\mathcal{U}(R/I)$ is also a locally nilpotent group, hence its torsion elements $\mathcal{TU}(R/I)$ form a subgroup of $\mathcal{U}(R/I)$.

\medskip

Our main result in this subsection is as follows.

\begin{theorem}\label{strongly}
Let the ring $R$ have strongly t.p.p. Then, in each of the following cases, $R$ is a periodic ring:
\begin{itemize}
  \item [(1)] If $\text{char}(R) > 0$ and $R$  is additively torsion.
  \item [(2)] If  $R$  is additively $2$-torsion.
\end{itemize}
\end{theorem}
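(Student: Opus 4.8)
The plan is to reduce both cases to Lemma~\ref{t.p.p}, whose conclusion ("$R$ is a locally finite field") is much stronger than "periodic" and which does most of the work once we are in prime characteristic and genuinely "additively torsion." The obstacle is precisely the gap between the hypotheses of Theorem~\ref{strongly} (strongly t.p.p., plus additively torsion or additively $2$-torsion) and the hypotheses of the lemma (t.p.p., prime characteristic, additively torsion): we must (a) pass to prime characteristic and (b) replace "torsion units" by "periodic elements" when we quotient by a nil-ideal. So the first step is to isolate the right nil-ideal. Write $N = \mathrm{Nil}(R)$ for the nil radical (or, if $R$ need not have a largest nil-ideal available, use the prime radical); since $R$ has strongly t.p.p., the quotient $\bar R := R/N$ has t.p.p. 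Note $\bar R$ has no nonzero nil-ideal, so it is in particular reduced enough that a torsion-unit decomposition of an element will be useful. The strategy is to prove $\bar R$ is a periodic ring (in fact locally finite) and then lift back to $R$.

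For case (2), additively $2$-torsion: every element of $R$, hence every element of $\bar R$, is a sum $u+v$ of two torsion units of $\bar R$. First I would establish $\mathrm{char}(\bar R) > 0$. This should follow from Lemma~\ref{char}: a torsion unit is a periodic element, so $\bar R$ is additively $2$-periodic, hence of positive characteristic. If $\mathrm{char}(\bar R) = p^n$ is not squarefree, I would pass to $\bar R/(p\bar R + \text{(nil part)})$ — more carefully, decompose $\bar R \cong \prod \bar R_j$ over the distinct prime-power factors of the characteristic (exactly as in the proof of Proposition~\ref{nil-lift}(2)), observe each factor still has t.p.p. and is still additively $2$-torsion, and for each factor quotient by the nil-ideal $p_j\bar R_j$ to land in characteristic $p_j$. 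Strongly t.p.p. is what lets us take these quotients and keep t.p.p. Once in characteristic $p$, Lemma~\ref{t.p.p} applies directly and gives that the relevant quotient is a locally finite field, hence periodic; then Proposition~\ref{nil-lift}(1) (product of the $\bar R_j$, each periodic) and the Claim inside the proof of Proposition~\ref{nil-lift}(2) (lifting periodicity modulo a nil-ideal, valid since $\mathrm{char} > 0$) push periodicity back up to $R$.

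For case (1), additively torsion with $\mathrm{char}(R) > 0$ already assumed: the argument is the same but shorter, since we do not need Lemma~\ref{char}. Decompose $R \cong \prod_j R_j$ according to the prime-power decomposition of $\mathrm{char}(R)$; each $R_j$ has strongly t.p.p. and is additively torsion. For each $j$ set $J_j = \mathrm{Nil}(R_j) + p_j R_j$, a nil-ideal, so $R_j/J_j$ has t.p.p., has characteristic $p_j$, and is additively torsion (the image of a torsion unit is a torsion unit, and a sum of torsion units maps to a sum of torsion units). Lemma~\ref{t.p.p} then gives that $R_j/J_j$ is a locally finite field; in particular periodic. Finally lift: the Claim in the proof of Proposition~\ref{nil-lift}(2) shows each $R_j$ is periodic (since $\mathrm{char}(R_j) > 0$ and $R_j/J_j$ is periodic), and Proposition~\ref{nil-lift}(1) gives that $R \cong \prod_j R_j$ is periodic. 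The only delicate point to check carefully is that "additively torsion" is genuinely inherited by these quotients — it is, because a torsion unit of $R$ maps to a torsion unit of the quotient — and that the finite additive decomposition persists (it obviously does, with the same or smaller number of summands); this is the routine bookkeeping I would not grind through. The main obstacle, as noted, is organizational: ensuring at each quotient step that strongly t.p.p. survives (it does, by hypothesis, at every nil quotient) so that Lemma~\ref{t.p.p} is actually applicable in prime characteristic.
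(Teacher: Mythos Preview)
Your proposal is correct and follows essentially the same route as the paper: decompose $R$ via the Chinese Remainder Theorem into factors of prime-power characteristic, pass to a quotient of prime characteristic by a nil-ideal (where strongly t.p.p.\ guarantees t.p.p.), invoke Lemma~\ref{t.p.p}, and lift periodicity back through the nil-ideal and the direct product. The paper's version is slightly leaner---it quotients directly by $pR$ (which is already nil when $\ch R = p^n$) rather than first passing to $R/\mathrm{Nil}(R)$, and it disposes of case~(2) in one line by observing that Lemma~\ref{char} gives $\ch(R)>0$ and then invoking case~(1)---but the substance is the same.
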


\begin{proof}
Let us write $\ch(R)=p_1^{n_1}p_2^{n_2}\dots p_k^{n_k}$, where all $p_i$'s are distinct primes. Thus, we have the decomposition $$R\simeq \prod_{i=1}^k R_i,$$ where, for each $i$, $R_i$ is a ring of characteristic $p_i^{n_i}$. According to \cite[Remark~3.5]{Cui}, it suffices to show that, for every $i$, $R_i$ is a periodic ring. As it can be easily seen, every $R_i$ has strongly t.p.p such that each of which element is a sum of a finite number of torsion elements in $R_i$, so we may assume with no harm in generality that $R$ itself has a prime power characteristic, say $p^n$.

Now, as $pR$ is obviously a nil-ideal of $R$, it follows that $R/(pR)$ has t.p.p. Therefore, Lemma~\ref{t.p.p} implies that $R/(pR)$ is a periodic ring. Hence, according to \cite[Corollary~3.7]{Cui}, we deduce that $R$ is periodic as well, thus proving (1).

Furthermore, one observes that the assertion (2) is a special case of (1) since, as stated in Lemma~\ref{char}, $R$ has a positive characteristic.
\end{proof}


As two immediate consequences of Theorem~\ref{strongly}, we extract:

\begin{corollary}\label{2-torsion}
Suppose that the ring $R$ is additively $2$-torsion. If $\mathcal{U}(R)$ is either a torsion or a locally nilpotent group, then $R$ is periodic.
\end{corollary}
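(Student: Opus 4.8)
The plan is to derive this corollary directly from Theorem~\ref{strongly}(2): that result already shows that \emph{any} additively $2$-torsion ring with strongly t.p.p. is periodic, so the whole task reduces to verifying that each of the two hypotheses on $\mathcal{U}(R)$ guarantees that $R$ has strongly t.p.p., i.e., that $R/I$ has t.p.p. for every nil-ideal $I$ of $R$.

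First I would dispose of the locally nilpotent case, which is essentially recorded already in the paragraph just before Theorem~\ref{strongly}. If $I$ is a nil-ideal of $R$, then $\mathcal{U}(R/I)$ is a homomorphic image of $\mathcal{U}(R)$ — here one uses the standard fact that a nil ideal lies in $J(R)$, so that an element of $R$ which becomes a unit in $R/I$ was already a unit in $R$, making the canonical map $\mathcal{U}(R)\to\mathcal{U}(R/I)$ surjective — and a quotient of a locally nilpotent group is again locally nilpotent; in a locally nilpotent group the elements of finite order form a subgroup, so $\mathcal{TU}(R/I)\leq\mathcal{U}(R/I)$ and hence $R/I$ has t.p.p.

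Next I would treat the torsion case by the same surjectivity $\mathcal{U}(R)\twoheadrightarrow\mathcal{U}(R/I)$: if $\mathcal{U}(R)$ is a torsion group, then so is its homomorphic image $\mathcal{U}(R/I)$, whence $\mathcal{TU}(R/I)=\mathcal{U}(R/I)$ is (trivially) a subgroup and $R/I$ has t.p.p. In both situations $R$ thus has strongly t.p.p.; since $R$ is additively $2$-torsion by hypothesis, Theorem~\ref{strongly}(2) applies verbatim and yields that $R$ is periodic.

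There is essentially no obstacle here — the argument is a short unwinding of the definition of strongly t.p.p.\ followed by an invocation of Theorem~\ref{strongly}. The only point deserving a word of justification is the surjectivity of the unit map modulo a nil ideal (equivalently, that nil ideals are contained in the Jacobson radical), which can simply be quoted; everything else is immediate.
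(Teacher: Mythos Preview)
Your argument is correct and is exactly the intended one: the paper records the corollary as an immediate consequence of Theorem~\ref{strongly}, with the locally nilpotent case already noted in the paragraph preceding that theorem, and your handling of the torsion case via the surjection $\mathcal{U}(R)\twoheadrightarrow\mathcal{U}(R/I)$ for a nil ideal $I$ is the obvious missing line. There is nothing to add.
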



Recall that a ring $R$ is said to be $2$-{\it good} if every element of $R$ is a sum of two units (see \cite{2-good2, 2-good}).

\begin{corollary}\label{2-good}
Suppose that the ring $R$ is a $2$-good ring. If $\mathcal{U}(R)$ is  a torsion group, then $R$ is periodic.
\end{corollary}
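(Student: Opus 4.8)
The plan is to reduce the statement directly to Corollary~\ref{2-torsion} (equivalently, to Theorem~\ref{strongly}). The first step is the elementary observation that a $2$-good ring whose unit group is torsion is automatically additively $2$-torsion: if $x \in R$, then by $2$-goodness we may write $x = u + v$ with $u, v \in \mathcal{U}(R)$, and since $\mathcal{U}(R)$ is a torsion group both $u$ and $v$ have finite order, i.e.\ $u, v \in \mathcal{TU}(R)$. Thus every element of $R$ is a sum of two torsion units.

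With this in hand, the conclusion is immediate from Corollary~\ref{2-torsion}: we are precisely in the case of an additively $2$-torsion ring whose unit group is torsion, so $R$ is periodic. If one wishes instead to argue straight from Theorem~\ref{strongly}(2), the only additional point to verify is that $R$ has strongly t.p.p. For this, let $I$ be any nil-ideal of $R$; because $I$ is nil we have $1 + I \subseteq \mathcal{U}(R)$, so the canonical map $\mathcal{U}(R) \to \mathcal{U}(R/I)$ is surjective, whence $\mathcal{U}(R/I)$ is a homomorphic image of the torsion group $\mathcal{U}(R)$ and is therefore itself torsion. Consequently $\mathcal{TU}(R/I) = \mathcal{U}(R/I)$ is trivially a subgroup, i.e.\ $R/I$ has t.p.p., so $R$ has strongly t.p.p. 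Since an additively $2$-torsion ring is in particular additively $2$-periodic, Lemma~\ref{char} gives $\ch(R) > 0$, and Theorem~\ref{strongly}(2) then yields that $R$ is periodic.

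There is no real obstacle in this argument — all the substance has been packaged into Theorem~\ref{strongly}. The only mildly subtle ingredient worth stating explicitly is the surjectivity of the reduction map on unit groups modulo a nil ideal, which is what makes the torsion hypothesis on $\mathcal{U}(R)$ descend to every nil quotient and hence secures the strongly t.p.p.\ assumption; everything else is a one-line unwinding of definitions.
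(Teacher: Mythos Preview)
Your proposal is correct and follows exactly the approach the paper intends: the corollary is stated as an immediate consequence of Theorem~\ref{strongly} (via Corollary~\ref{2-torsion}), and you have simply written out the one-line observation that $2$-good with torsion unit group means additively $2$-torsion, together with the standard lifting-of-units argument to verify strongly t.p.p. There is nothing to add or correct.
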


We close this section by noticing that, in connection with Theorem~\ref{algebraic}, Corollary~\ref{2-torsion} and Corollary~\ref{2-good}, it may come to mind that if $\mathcal{U}(R)$ is a torsion group, then $R$ is always periodic. However, this is manifestly {\it untrue}. As a simple example, let $F$ be a finite field. Then, the unit group of the polynomial ring $F[x]$ is too finite, while $F[x]$ is {\it not} a periodic ring. In fact, for each ring $R$, the Laurent polynomial ring $R[x, x^{-1}]$, the power series ring $R[[x]]$, and the polynomial ring $R[x]$ are {\it not} additively $2$-periodic. Otherwise, the central element $x$ would be periodic by virtue of Lemmas~\ref{char} and \ref{commute-periodic}, which would lead to a contradiction. However, we do not know whether or {\it not} these rings are additively periodic.

\medskip


\medskip

\section{Concluding Discussion and Open Problems}

We close the work with the following four intriguing queries. Firstly, we inquire whether the converse of Theorem~\ref{triang}(1) is true.

\begin{problem}
Suppose that $R$ is a ring and $n\geq 2$ is an integer. Does it follow that the matrix ring ${\rm M}_n(R)$ being additively periodic will imply the same property for $R$?
\end{problem}

We conjecture that the answer is {\it no}.

\medskip


Our second question is a main tool for further studies over rings additively generated by periodic elements.

\begin{problem}
Is any additively $2$-periodic ring a (weakly) periodic ring or {\it not}?
\end{problem}


Certainly, all finite rings are periodic, but not all of them are potent rings however.

\medskip

Now, we ask the following:

\begin{problem}
Is any element of a finite ring a sum of (at least) two potents and, if {\it not}, decide when it is possible?
\end{problem}


The next challenging problem arisen from the main results obtained in \cite{ACDT} and \cite{YKZ}, respectively (see also \cite[Question 6.4]{YKZ}).

\begin{problem}
Examine those rings in which every element is a sum of two (concrete) potents that commute with each other, especially either the sum of commuting $3$-potent and $4$-potent, or the sum of commuting $3$-potent and $5$-potent, respectively.
\end{problem}


\noindent{\bf Acknowledgment.} The authors are deeply thankful to Prof. Ivan D. Chipchakov from the Institute of Mathematics \& Informatics of the Bulgarian Academy of Sciences for the valuable discussion on the initial version the paper, which led to an improvement in its quality.


\medskip

\noindent{\bf Declarations.} Our statements here are the following:
\begin{itemize}
\item {\bf Ethical Declarations and Approval:} The authors have no any competing interest to declare that are relevant to the content of this article.
\item {\bf Competing Interests:} The authors declare no any conflict of interest.
\item  {\bf Authors' Contributions:} All three listed authors worked and contributed to the paper equally. The final editing was done by the corresponding author P.V. Danchev and was approved by all of the present authors.
\item {\bf Availability of Data and Materials:} Data sharing not applicable to this article as no data-sets or any other materials were generated or analyzed during the current study.
\end{itemize}


\medskip

\noindent{\bf Funding.} The research work of the first-named author (M. H. Bien) is funded by University of Science, VNUHCM under Grant Nos. T2023-04 and T2023-05. The research work of the second-named author (P.V. Danchev) is supported in part by the Bulgarian National Science Fund under Grant KP-06 No. 32/1 of December 07, 2019, as well as by the Junta de Andaluc\'ia under Grant FQM 264, and by the BIDEB 2221 of T\"UB\'ITAK. The research work of the third-named author (M. Ramezan-Nassab) is supported in part by a grant from IPM (Grant No. 1402160023).


\medskip

\end{document}